
\documentclass[12pt, notitlepage]{amsart}
\usepackage{latexsym, amsfonts, amsmath, amssymb, amsthm, cite, tabls, paralist}

\usepackage[final]{showlabels}


\newtheorem{theorem}{Theorem}[section]
\newtheorem{lemma}[theorem]{Lemma}
\newtheorem{proposition}[theorem]{Proposition}
\newtheorem*{nonumlemma}{Lemma}
\newtheorem*{nonumtheorem}{Theorem}
\newtheorem{corollary}[theorem]{Corollary}
\newtheorem{conjecture}[theorem]{Conjecture}

\theoremstyle{remark}
\newtheorem{remark}[theorem]{Remark}

\theoremstyle{definition}

\theoremstyle{remark}

\numberwithin{equation}{section}



\newcommand{\Z}{\mathbb{Z}}
\newcommand{\Q}{\mathbb{Q}}
\newcommand{\R}{\mathbb{R}}

\newcommand{\C}{\mathbb{C}}

\pagestyle{headings}

\oddsidemargin -0.25in \evensidemargin -0.25in \textwidth 6.5in

\sloppy \flushbottom
\parindent 1em

\marginparwidth 48pt \marginparsep 10pt \columnsep 10mm

\usepackage{graphicx}
\usepackage{mathrsfs}

\usepackage{varioref}

\newcommand{\unif}{\text{unif}}

\begin{document}

\title{On an inverse ternary Goldbach problem}
\author{Xuancheng Shao}
\address{Department of Mathematics \\ Stanford University \\
450 Serra Mall, Bldg. 380\\ Stanford, CA 94305-2125}
\email{xshao@math.stanford.edu}

\maketitle

\begin{abstract}
We prove an inverse ternary Goldbach-type result. Let $N$ be sufficiently large and $c>0$ be sufficiently small. If $A_1,A_2,A_3\subset [N]$ are subsets with $|A_1|,|A_2|,|A_3|\geq N^{1/3-c}$, then $A_1+A_2+A_3$ contains a composite number. This improves on the bound $N^{1/3+o(1)}$ obtained in \cite{PSS88} using Gallagher's larger sieve. The main ingredients in our argument include a type of inverse sieve result in the larger sieve regime, and a variant of the analytic large sieve inequality.
\end{abstract}

\section{Introduction}

An old conjecture of Ostmann, sometimes called the inverse Goldbach problem, says that there are no nontrivial additive decompositions of the set of primes. In other words, there do not exist subsets $A_1,A_2\subset\Z$ with $|A_1|,|A_2|>1$ such that for all sufficiently large $n\in\Z$, $n\in A_1+A_2$ if and only if $n$ is prime. Here $A_1+A_2=\{a_1+a_2:a_1\in A_1,a_2\in A_2\}$. Its ternary analogue is solved by Elsholtz \cite{Els01}.

\begin{theorem}
There do not exist subsets $A_1,A_2,A_3\subset\Z$ with $|A_1|,|A_2|,|A_3|>1$ such that for all sufficiently large $n\in\Z$, $n\in A_1+A_2+A_3$ if and only if $n$ is prime.
\end{theorem}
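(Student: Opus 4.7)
The plan is to argue by contradiction: suppose $A_1, A_2, A_3 \subseteq \Z$ with each $|A_i| \geq 2$ satisfy the hypothesis, and deduce that $|(A_1+A_2+A_3) \cap [0, N]|$ is $o(N/\log N)$, contradicting the fact that this set agrees with the primes in $[n_0, N]$ and hence has size $\sim N/\log N$.

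The first step is to establish an admissibility property: for every prime $q$ and every $i$, $A_i \bmod q$ does not exhaust $\Z/q\Z$. Otherwise, for any $s \in A_j + A_\ell$ sufficiently large (where $\{i,j,\ell\} = \{1,2,3\}$), some $a_i \in A_i$ would give $a_i+s \equiv 0 \pmod q$ with $a_i+s > q$, producing a composite element of $A_1+A_2+A_3$ and contradicting primality. Iterated Cauchy--Davenport applied to $A_1+A_2+A_3 \bmod q \subseteq (\Z/q\Z) \setminus \{0\}$ then yields the stronger bound
\[
|A_1 \bmod q| + |A_2 \bmod q| + |A_3 \bmod q| \leq q+1 \quad \text{for every prime } q.
\]

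Next I would invoke the upper bound sieve. For any finite $B \subseteq A_i$ with $|B| = k$, the set $B$ is admissible as a $k$-tuple by the previous step, so Selberg's sieve gives $|\{s \leq N : s+b \text{ prime for all } b \in B\}| \leq C_B N/(\log N)^k$. Since $s+b \in A_1+A_2+A_3$ is prime for every sufficiently large $s \in A_j+A_\ell$ and every $b \in B$, we obtain $|(A_j+A_\ell) \cap [n_0, N]| \leq C_B N/(\log N)^k$. In the main subcase where some $A_i$ is finite with $|A_i| = k \geq 2$---say $i = 1$---taking $B = A_1$ yields
\[
|(A_1+A_2+A_3)\cap[0,N]| \leq |A_1| \cdot |(A_2+A_3) \cap [0,N]| \leq C|A_1| \cdot \frac{N}{(\log N)^{|A_1|}},
\]
which is $o(N/\log N)$ for $|A_1| \geq 2$, contradicting $\pi(N) \sim N/\log N$.

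The main obstacle is the case where all three $A_i$ are infinite: the product bound $|A_1+A_2+A_3 \cap [0,N]| \leq |A_1\cap[0,N]| \cdot |(A_2+A_3) \cap [0,N]|$ is too lossy even when both factors are sieved. Here I would additionally sieve each individual $A_i$ using finite admissible subsets drawn from the (infinite) sumset $A_j + A_\ell$, obtaining $|A_i \cap [0, N]| \leq C_h N/(\log N)^h$ for every $h$, and then convert these density bounds into a bound on $|(A_1+A_2+A_3) \cap [0,N]|$ via a Pl\"unnecke--Ruzsa-type inequality that exploits the small doubling of the pairwise sumsets $A_i + A_j$. I expect this step, controlling the combinatorial interplay of the three density bounds against the explosion of sieve constants as $h$ grows, to require the most care.
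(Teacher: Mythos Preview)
The paper does not give its own proof of this statement; it is quoted as a known theorem of Elsholtz and cited to \cite{Els01}. So there is nothing in the paper to compare against, and I will comment on your proposal on its own terms.

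Your treatment of the case where some $A_i$ is finite is essentially right: admissibility plus Selberg's sieve applied to the finite set $B=A_i$ gives $|(A_j+A_\ell)\cap[0,N]|\ll N/(\log N)^{|A_i|}$, and the trivial product bound finishes. (There is a small wrinkle in your admissibility step: you need $a_i+s\geq\max(n_0,q+1)$, and the particular $a_i\in A_i$ hitting the required residue class could be very negative, so that $a_i+s$ is small. This is fixable by first reducing to the case where each $A_i$ is bounded below, e.g.\ by a translation and a short argument, but it does need to be said.)

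The real gap is your infinite case. A Pl\"unnecke--Ruzsa inequality does not do what you want here: it transfers a small-doubling hypothesis into control of iterated sumsets, not an upper bound on $|A_1+A_2+A_3|$ from density bounds on the individual $A_i$. Even if $|A_i\cap[0,N]|\ll_h N/(\log N)^h$ for every $h$, there is no combinatorial obstruction whatsoever to the triple sumset having $\sim N/\log N$ elements in $[0,N]$. You also invoke ``small doubling of the pairwise sumsets $A_i+A_j$'' without having established any such thing from your density bounds. Elsholtz's argument does not go via Selberg's upper bound sieve on prime $k$-tuples. Instead one applies Gallagher's larger sieve together with Cauchy--Davenport to the \emph{binary} groupings $A_i$ versus $A_j+A_k$: since $(A_i\bmod p)+((A_j+A_k)\bmod p)$ misses $0$, Cauchy--Davenport gives $|A_i\bmod p|+|(A_j+A_k)\bmod p|\leq p$, and the larger sieve then bounds both $|A_i\cap[0,N]|$ and $|(A_j+A_k)\cap[0,N]|$ by $N^{1/2+o(1)}$. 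Cycling the indices pins every $|A_i|$ and every $|A_j+A_k|$ to size $N^{1/2+o(1)}$, and it is this rigid two-sided information (not a generic sumset inequality) that is then leveraged to reach a contradiction. If you want to reconstruct the proof yourself, that is the mechanism to pursue.
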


For more references on this problem see \cite{Els06,EH13} and the survey \cite{Els09}. In this paper we study additive decompositions of \textit{subsets} of primes. More precisely, can we find large sets $A_1,A_2\subset\Z$ such that if $n\in A_1+A_2$ then $n$ is prime? Heuristically, the answer to this question should be no (for an appropriate meaning of largeness) because the primes behave randomly from an additive point of view. This problem was discussed in \cite{PSS88,Els06,GH14}.

\begin{conjecture}\label{conj:gh}
Let $\delta>0$. The following holds for $N$ sufficiently large depending on $\delta$. If $A_1,A_2\subset [N]$ are subsets with $|A_1|,|A_2|\geq N^{\delta}$, then $A_1+A_2$ contains a composite number.
\end{conjecture}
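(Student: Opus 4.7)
The plan is to argue by contradiction: assume $A_1, A_2 \subset [N]$ with $|A_1|, |A_2| \geq N^{\delta}$ and every element of $A_1 + A_2$ is prime, and try to derive bounds forcing at least one $|A_i|$ to be small.

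First, for every prime $p$ in a suitable range (say $p \leq N^{1-\delta}$), the sumset $(A_1 \bmod p) + (A_2 \bmod p)$ inside $\F_p$ cannot contain $0$, because each element of $A_1+A_2$ is a prime exceeding $p$. By the Cauchy--Davenport inequality this forces
\[ |A_1 \bmod p| + |A_2 \bmod p| \leq p, \]
so for each such $p$, at least one of the two residue sets has cardinality $\leq p/2$. Partitioning the primes $p \in [P, 2P]$ according to which side wins, one finds an $i \in \{1,2\}$ for which $|A_i \bmod p| \leq p/2$ for a positive proportion of these primes. Feeding this constraint into Gallagher's larger sieve yields $|A_i| \leq N^{1-o(1)}$, which is nowhere near $N^{\delta}$.

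The heart of the matter is to strengthen this bound. The natural route is an \emph{inverse sieve} statement: if $|A_i \bmod p|$ hovers near $p/2$ for many primes, then Cauchy--Davenport is nearly tight, so $A_i \bmod p$ must be highly structured — essentially an arithmetic progression — and symmetrically $A_j \bmod p$ packs the rest of $\F_p\setminus\{0\}$. I would try to promote this local structure into a global Freiman-type statement for $A_1$ and $A_2$, and then couple it with a bilinear variant of the analytic large sieve inequality (of the flavor advertised in the abstract) to produce a bound of the form $|A_1|\cdot|A_2| \leq N^{1-\eta}$. Together with the hypothesis $|A_1|\cdot|A_2|\geq N^{2\delta}$, this would rule out all sufficiently small~$\delta$.

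The genuine obstacle is making $\eta$ as large as needed to kill \emph{every} $\delta>0$. The structure-to-size conversion seems to lose too much without some further analytic input, such as Bombieri--Vinogradov level-of-distribution information to forbid large Freiman-structured subsets of the primes, or a way to iterate the local-to-global step. This step is where I expect the argument to break down, which is presumably why the authors leave the binary case as Conjecture~\ref{conj:gh} and instead establish the ternary version: in the three-summand setting a third copy of $A$ provides an extra sumset constraint mod $p$, making both the Cauchy--Davenport input and the inverse sieve analysis substantially more forgiving and permitting a strong bilinear/trilinear large sieve argument to close the loop.
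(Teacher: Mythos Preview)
The statement you are attempting to prove is Conjecture~\ref{conj:gh}, which the paper does \emph{not} prove; immediately after stating it the authors write that it ``is open for $\delta\leq 1/2$,'' and that proving it for any $\delta<1/2$ would already resolve Ostmann's binary inverse Goldbach problem. So there is no proof in the paper to compare your proposal against.

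Your proposal is, by your own admission in the final two paragraphs, not a proof: you correctly identify that the structure-to-size conversion step ``is where I expect the argument to break down,'' and this is exactly right. The paper's machinery (the inverse-sieve / small-doubling argument of Proposition~\ref{prop:small-doubling} and the large sieve variant Theorem~\ref{thm:large}) is tailored to the situation where each $A_i\pmod p$ sits in a progression of length about $p/3$, which arises only when there are \emph{three} summands and Cauchy--Davenport combined with the Freiman-type Lemma~\ref{lem:gryn} force genuine arithmetic-progression structure. With two summands the constraint $|A_1\bmod p|+|A_2\bmod p|\leq p$ permits one side to occupy nearly $p/2$ residues with no progression structure whatsoever (quadratic residues being the model obstruction), and nothing in the paper gets past this.

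There is also a slip in your opening computation: even granting $|A_i\bmod p|\leq p/2$ for all primes $p$ up to $N^{1/2}$, Gallagher's larger sieve yields $|A_i|\ll N^{1/2}$, not $N^{1-o(1)}$; and in any case neither bound contradicts $|A_i|\geq N^{\delta}$ once $\delta<1/2$. This is precisely the regime the paper declares open.
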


Here $[N]=\{1,2,\cdots,N\}$.
The conjecture is open for $\delta\leq 1/2$. Indeed, if this is true for any $\delta<1/2$ then the inverse (binary) Goldbach problem follows. In the other direction, there exist $A_1,A_2\subset [N]$ with $|A_1|,|A_2|\geq \log N/\log\log N$ such that $A_1+A_2$ is contained in the primes (see Corollary 1.3.6 in \cite{Els09}). Note also the similarity between Conjecture \ref{conj:gh} and the problem of finding the clique numbers of Paley sum graphs, constructed using quadratic residues in a finite field.

Our main result provides a nontrivial bound for the ternary analogue of Conjecture \ref{conj:gh}.

\begin{theorem}\label{thm:main}
Let $N$ be sufficiently large and $c>0$ be sufficiently small. If $A_1,A_2,A_3\subset [N]$ are subsets with $|A_1|,|A_2|,|A_3|\geq N^{1/3-c}$, then $A_1+A_2+A_3$ contains a composite number.
\end{theorem}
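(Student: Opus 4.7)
Suppose for contradiction that $A_1+A_2+A_3$ consists only of primes. For every prime $p\leq 3N$, no composite multiple of $p$ can lie in $A_1+A_2+A_3$, so $(A_1+A_2+A_3)\bmod p$ avoids the residue class $0\pmod p$ (with the single possible exception $n=p$ itself). The three-fold Cauchy--Davenport inequality then gives, writing $|A_i|_p:=|A_i\bmod p|$,
\[
|A_1|_p+|A_2|_p+|A_3|_p\leq p+1,
\]
so in particular at least one of the $|A_i|_p$ is $\leq(p+1)/3$. Partitioning the primes $p\leq Q$ into three classes according to which index has the smallest residue count and applying Gallagher's larger sieve inside each class is what yields the $N^{1/3+o(1)}$ bound of \cite{PSS88}, and breaking past this barrier is the whole challenge.

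To gain the extra $N^c$, I would analyse the extremal case of the previous step: if the resulting bound is saturated up to a factor $N^c$, then $|A_i|_p$ is forced to be within $(1-\eta)$ of the Cauchy--Davenport limit $(p+1)/3$ for a positive density of primes $p$, since otherwise the larger sieve automatically improves by $N^{c(\eta)}$. In that quasi-extremal regime I would invoke a Vosper-type description of three-fold Cauchy--Davenport near-equality: each $A_i\bmod p$ must be close to an arithmetic progression with common difference dictated by the other two factors. The \emph{inverse sieve result in the larger sieve regime} mentioned in the abstract should promote this local ``almost AP mod $p$'' data, for a positive density of $p$, into honest global structure on $A_i\subset[N]$---forcing each $A_i$ to be concentrated in a genuine arithmetic progression of moderate common difference. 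Once that is known, $A_1+A_2+A_3$ itself contains a long arithmetic progression, which cannot be contained in the primes.

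The technical link between the analytic input and the combinatorial inverse step is the \emph{variant of the analytic large sieve inequality}. Writing $f_i=\mathbf{1}_{A_i}$, the omission of $0\pmod p$ gives the Fourier identity
\[
|A_1|\,|A_2|\,|A_3|-p\,r(p)=-\sum_{a=1}^{p-1}\hat f_1(a/p)\hat f_2(a/p)\hat f_3(a/p),
\]
where $r(p)$ counts representations of $p$ as $a_1+a_2+a_3$ with $a_i\in A_i$. Summing over a well-chosen dyadic range of primes (where $\sum_{p\sim P} p\,r(p)$ is negligible against $\pi(P)|A_1||A_2||A_3|$) and dualising via a cubic-moment large sieve should isolate the exponential sums $\hat f_i(a/p)$ of near-maximal size---exactly those certifying the almost-AP structure mod $p$. \textbf{The main obstacle} is making the inverse step quantitative in the larger-sieve regime: classical inverse large-sieve theorems (Helfgott--Venkatesh, Walsh) apply only when the excluded residues form a proportion $\epsilon$ of $\mathbb{Z}/p\mathbb{Z}$, whereas here the forbidden set has density about $2/3$, so a genuinely new structural input is required, and producing one strong enough to yield $N^{-c}$ rather than $N^{-o(1)}$ is the crux of the argument.
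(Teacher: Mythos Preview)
Your proposal captures the opening moves correctly---Cauchy--Davenport gives $|A_1|_p+|A_2|_p+|A_3|_p\leq p+1$, and the extremal analysis forces each $|A_i|_p$ to be close to $p/3$ for most primes, whence a Freiman/Vosper-type inverse (the paper uses Lemma~\ref{lem:gryn}) puts each $A_i\bmod p$ inside a short arithmetic progression. But from that point your plan diverges from the paper in a way that does not close.

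You propose to \emph{globalise}: lift the ``almost-AP mod $p$'' data to a genuine arithmetic progression in $[N]$ containing $A_i$, and then finish by noting that $A_1+A_2+A_3$ would contain a long AP, contradicting its being prime. The paper never attempts this, and for good reason: no known inverse-sieve statement produces global AP structure from AP structure modulo a positive proportion of primes. What the paper actually proves and uses is Theorem~\ref{thm:sieve-ap-simple}: if $A\bmod p$ lies in an AP of length $\alpha p$ for each $p\leq N^\alpha$ with $\alpha\in[1/3,1/2]$, then directly $|A|\ll N^{\alpha-c}$. This is a \emph{size bound}, not a structure theorem, and it is exactly the contradiction to $|A_i|\geq N^{1/3-c}$. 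Its proof proceeds via two ingredients you do not mention: first, the variant large sieve (Theorem~\ref{thm:large}) is applied to $f=1_A\ast 1_A$ to show $|A-A|\ll N^{1/2-c'}$ (Theorem~\ref{thm:sieve-ap-large} and Lemma~\ref{lem:lift}), and second, this small-doubling information is fed into a refined larger sieve (Proposition~\ref{prop:small-doubling}) that beats $N^\alpha$ by a power.

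Your proposed use of the large-sieve variant---summing the trilinear Fourier identity over primes and dualising a cubic moment---is not how the paper proceeds, and it is unclear how that route would isolate structural information sharp enough to gain $N^{-c}$. The paper's application is to a single set (in fact to $A-A$), lifting additive energy from residues to integers; the trilinear coupling of $A_1,A_2,A_3$ is handled purely combinatorially at the Cauchy--Davenport/Grynkiewicz step, after which each $A_i$ is treated separately. A minor further gap: your treatment of the exceptional residue $0\bmod p$ (the prime $p$ itself) is informal; the paper disposes of this by first invoking Proposition~\ref{prop:main-weak} to prune small elements and ensure every sum exceeds $N^{0.6}$.
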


If $N^{1/3-c}$ above is replaced by $N^{1/3+o(1)}$, then the result follows from Gallagher's larger sieve; see Theorem 3 in \cite{PSS88}. There are two main ideas in the proof of Theorem \ref{thm:main}, which we discuss below. The first idea is to get an improved bound in sieving situations when there are additional additive structures available. The second idea is to develop a variant of the analytic large sieve inequality that is better suited in certain circumstances.

\subsection{Sieving with additive structures}

To see the connection between Theorem \ref{thm:main} and sieving problems, if we assume that all elements of $A_1+A_2+A_3$ are prime, then after some pruning process we may conclude that $A_1+A_2+A_3$ misses the residue class $0\pmod p$ for $p$ up to some threshold. This in turn implies that the individual sets $A_1,A_2,A_3$ can occupy at most about $p/3$ residue classes on average. By Gallagher's larger sieve, this leads to the bound $N^{1/3+o(1)}$ instead of $N^{1/3-c}$; more details can be found in the proof of Proposition \ref{prop:main-weak}.

In fact, we can say more than $A_1,A_2,A_3$ occuping at most $p/3$ residue classes. Using a Freiman-type result in additive combinatorics (Lemma \ref{lem:gryn} below), we may furthermore assume that $A_1,A_2,A_3\pmod p$ are contained in an \textit{arithmetic progression} of length slightly above $p/3$. Under this further assumption we can indeed improve the larger sieve.

\begin{theorem}\label{thm:sieve-ap-simple}
Let $A\subset [N]$ be a subset. Let $\alpha\in [1/3,1/2]$ be real. Let $c>0$ be sufficiently small. If, for each prime $p\leq N^{\alpha}$, the residues $A\pmod p$ lie in an arithmetic progression $S_p\subset\Z/p\Z$ of length $\alpha p$, then $|A|\ll N^{\alpha-c}$.
\end{theorem}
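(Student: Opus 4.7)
The plan is to combine a Fourier-analytic lower bound coming from the AP hypothesis with an upper bound from the analytic large sieve, sharpened via the arithmetic structure of the AP frequencies. Fix $Q := N^{\alpha}$, and for each prime $p \leq Q$ let $N_p(b) = |\{n \in A : n \equiv b \pmod{p}\}|$, with $S_p = r_p + d_p \cdot \{0, 1, \ldots, \alpha p - 1\}$ the containing AP. Since $N_p$ is supported on $\alpha p$ residues, Cauchy--Schwarz gives $\sum_b N_p(b)^2 \geq |A|^2 / (\alpha p)$, which by Plancherel on $\Z/p\Z$ becomes
$$\sum_{a=0}^{p-1}\bigl|\widehat{1_A}(a/p)\bigr|^2 \geq \frac{|A|^2}{\alpha}.$$
Summing over $p \leq Q$ and comparing with the standard analytic large sieve $\sum_{p \leq Q}\sum_{a=1}^{p-1}|\widehat{1_A}(a/p)|^2 \leq (N + Q^2)|A| \ll N|A|$ (valid since $\alpha \leq 1/2$) gives only $|A| \ll_{\alpha} N^{1-\alpha}\log N$; at $\alpha = 1/3$ this is $N^{2/3}\log N$, vastly worse than the target $N^{1/3 - c}$, and at $\alpha = 1/2$ it barely matches Gallagher. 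So the $c > 0$ improvement has to come from elsewhere.

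The second ingredient is to exploit the fact that $S_p$ is an arithmetic progression, not merely a set of size $\alpha p$. The Fourier transform of $1_{S_p}$ on $\Z/p\Z$ concentrates at the ``structured'' set of frequencies
$$T_p(K) := \{k d_p^{-1}/p : 1 \leq |k| \leq K\},$$
via the computation $|\widehat{1_{S_p}}(k d_p^{-1}/p)|^2 \asymp p^2 \sin^2(\pi\alpha k)/(\pi k)^2$ together with the Parseval identity $\sum_{k \neq 0} \sin^2(\pi\alpha k)/(\pi k)^2 = \alpha(1 - \alpha)$, whose tail beyond $|k| = K$ is only $O(1/K)$. Expanding the trivial identity $|A| = \sum_n 1_A(n)\,1_{S_p}(n\bmod p)$ by Fourier inversion on $\Z/p\Z$ and applying Cauchy--Schwarz on the complement $T_p(K)^c$ transfers this concentration from $1_{S_p}$ to $1_A$: for $K$ chosen large enough in terms of $\alpha$, one obtains a lower bound of the form
$$\sum_{p \leq Q}\sum_{a \in T_p(K)}\bigl|\widehat{1_A}(a/p)\bigr|^2 \gg_{\alpha} |A|^2\,\pi(Q).$$

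The crux is then to pair this with an upper bound on the structured sum that improves on the $(N + Q^2)|A|$ coming from the full large sieve; this is precisely where the paper's ``variant of the analytic large sieve inequality'' is needed. Although $\bigcup_{p \leq Q}T_p(K)$ contains only $\ll_{\alpha} \pi(Q)$ frequencies (compared to $\sim Q^2$ for Farey fractions), its elements $k d_p^{-1}/p$ can still cluster at distances $\asymp 1/Q^2$, so the well-spaced large sieve of Montgomery--Vaughan yields no saving whatsoever. The main obstacle, and in my view the true novelty of the result, is to rule out such clustering using the arithmetic rigidity of $d_p^{-1}$ as a modular inverse rather than a generic residue --- for example, through a dispersion or bilinear-form argument counting near-collisions $k d_p^{-1}/p \approx k' d_{p'}^{-1}/p'$ and converting them into congruence information mod $pp'$. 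Once an improved upper bound of the shape $\ll_{\alpha} (N + Q^{1+o(1)})|A|$ is in hand, balancing it against the lower bound above forces $|A| \ll N^{\alpha - c}$ for a suitable $c > 0$ depending only on $\alpha$.
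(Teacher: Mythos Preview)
Your final balancing step is simply wrong. Granting both the lower bound $\sum_{p\le Q}\sum_{a\in T_p(K)}|\widehat{1_A}(a/p)|^2\gg_\alpha |A|^2\pi(Q)$ and the optimistic upper bound $\ll_\alpha (N+Q^{1+o(1)})|A|$, with $Q=N^\alpha\le N^{1/2}$ you obtain
\[
|A|\ \ll\ \frac{N+Q^{1+o(1)}}{\pi(Q)}\ \ll\ N^{1-\alpha}\log N,
\]
which for $\alpha=1/3$ is $N^{2/3+o(1)}$, not $N^{1/3-c}$. This is not a slip of arithmetic but a structural point: the analytic large sieve, in any of its variants, bounds $|A|$ by something of order $N^{1/2}$ at best when applied directly to $1_A$; it cannot by itself reach $N^\alpha$ for $\alpha<1/2$. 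That regime is governed by Gallagher's \emph{larger} sieve, which you never invoke.

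The hoped-for upper bound is also unattainable. Take the simplest case $d_p=1$ for all $p$ (each $S_p$ a genuine interval). Then $T_p(K)=\{k/p:1\le|k|\le K\}$, and $|k/p-k'/p'|=|kp'-k'p|/(pp')$ can be exactly $1/(pp')\asymp Q^{-2}$ infinitely often. There is no ``arithmetic rigidity of $d_p^{-1}$'' to exploit: the common differences $d_p$ are handed to you by the hypothesis and may be anything, so no dispersion argument on near-collisions can save the spacing.

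The paper's route is entirely different. One first passes to the difference set: $(A-A)\pmod p$ lies in an arithmetic progression of length $2\alpha p\le(1-2c)p$, and to $A-A$ one applies a Green--Harper style argument (powered by the large sieve variant, Theorem~\ref{thm:large}) to get $|A-A|\ll N^{1/2-c'}$. This is where the large sieve is used---on the difference set, where an $N^{1/2-c'}$ bound is genuinely nontrivial. If $|A|>N^{\alpha-c'/2}$ this forces $|A-A|\ll|A|^{(3-c')/2}$ (using $\alpha\ge 1/3$), i.e.\ small doubling. The conclusion then comes from an improved \emph{larger} sieve under a small-doubling hypothesis (Proposition~\ref{prop:small-doubling}), a differenced version of Gallagher's argument. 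In short: the large sieve variant controls $|A-A|$, and the larger sieve, sharpened by the resulting additive structure, controls $|A|$.
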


When $\alpha=1/2$, the set of squares up to $N$ has size about $N^{1/2}$, and occupies about $p/2$ residue classes modulo each prime $p$. Of course these residue classes are exactly the quadratic residues, and should certainly be far away from being an arithmetic progression. The inverse large sieve conjecture roughly says that either the set $A$ possesses some algebraic structure like being the squares, or the size of $A$ is much smaller than predicted by the large sieve. For more evidences on the inverse large sieve conjecture see \cite{HV09,Wal12,GH14}.

We expect Theorem \ref{thm:sieve-ap-simple} to hold for any $\alpha>0$ (with some $c>0$ sufficiently small depending on $\alpha$), and this will imply the inverse Goldbach-type result with more than three summands. Unfortunately our argument is not sufficient for this purpose.

\subsection{A variant of the large sieve}

Recall the traditional large sieve inequality. For a compactly supported function $f:\Z\rightarrow\C$, its Fourier transform is defined by
\[ \hat{f}(x)=\sum_n f(n)e(-xn) \]
for $x\in\R/\Z$, where $e(xn)=e^{2\pi i xn}$.

\begin{theorem}[Analytic large sieve]\label{thm:large-old}
Let $f:[N]\rightarrow\C$ be an arbitrary function. Let $x_1,\cdots,x_m\in\R/\Z$ be points that are at least $\delta$-spaced, meaning that $\|x_i-x_j\|\geq\delta$ for all $i\neq j$, where $\|x\|$ is the distance from $x$ to its nearest integer. Then
\[ \sum_{i=1}^m |\hat{f}(x_i)|^2\ll (N+\delta^{-1})\sum_{n=1}^N |f(n)|^2. \]
\end{theorem}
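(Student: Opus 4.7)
My plan is to use Gallagher's method, which compares the discrete sum $\sum_i |\hat f(x_i)|^2$ to a continuous integral via a short variational argument, and then applies Parseval. The starting point is the pointwise Sobolev-type inequality: if $T\in C^1(\R/\Z)$ and $I\subset\R/\Z$ is an interval of length $\delta$ centered at a point $x$, then
\[ |T(x)|^2 \leq \delta^{-1}\int_I |T(y)|^2\,dy + \int_I |T(y)||T'(y)|\,dy. \]
This follows from $|T(x)|^2 = |T(y)|^2 + \int_y^x (|T|^2)'(u)\,du$ by averaging over $y\in I$ and bounding $(|T|^2)' = 2\operatorname{Re}(\overline{T}T') \leq 2|T||T'|$.

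First I would take $T=\hat f$ and apply this estimate at each point $x_i$. Since the $x_i$ are $\delta$-spaced, the intervals $I_i = [x_i-\delta/2,\,x_i+\delta/2]$ are pairwise disjoint in $\R/\Z$, so summing the pointwise inequalities over $i$ yields
\[ \sum_{i=1}^m |\hat f(x_i)|^2 \leq \delta^{-1}\int_0^1 |\hat f(y)|^2\,dy + \int_0^1 |\hat f(y)||\hat f'(y)|\,dy. \]

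Next I would bound the two integrals on the right using Parseval together with the hypothesis that $f$ is supported on $[N]$. Parseval gives $\int_0^1 |\hat f|^2 = \sum_n |f(n)|^2$. Differentiating termwise, $\hat f'(y) = -2\pi i\sum_n n f(n)e(-yn)$, so $\int_0^1 |\hat f'|^2 = 4\pi^2\sum_n n^2|f(n)|^2 \leq 4\pi^2 N^2 \sum_n |f(n)|^2$. Cauchy--Schwarz then gives
\[ \int_0^1 |\hat f(y)||\hat f'(y)|\,dy \leq 2\pi N \sum_n |f(n)|^2, \]
and combining these bounds produces $\sum_i|\hat f(x_i)|^2 \ll (N+\delta^{-1})\sum_n|f(n)|^2$, as desired.

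The only step requiring any genuine work is the pointwise Sobolev inequality, which is a one-line application of the fundamental theorem of calculus; there is no serious obstacle here, and everything afterward is a direct application of Parseval and Cauchy--Schwarz. Obtaining the optimal constant (i.e.\ replacing $\ll$ with $N+\delta^{-1}$ and constant $1$) would require swapping Gallagher's lemma for a Beurling--Selberg extremal majorant argument, but this refinement is unnecessary for the $\ll$ form stated here.
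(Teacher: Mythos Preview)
Your proof via Gallagher's lemma is correct and is one of the standard proofs of this inequality. The paper itself does not prove Theorem~\ref{thm:large-old}; it simply quotes the result and refers the reader to the surveys \cite{Mon78,Bom87} for proofs, so there is no ``paper's own proof'' to compare against. Your approach is exactly Gallagher's original argument (which is among the proofs covered in those references), and the only minor slip is a missing factor of $2$ in the pointwise Sobolev inequality (the bound on $(|T|^2)'$ gives $2|T||T'|$, not $|T||T'|$), but this is irrelevant for the $\ll$ form of the statement.
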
 

For proofs and applications of it see \cite{Mon78,Bom87}. In sieve problems, the large sieve inequality is usually applied with the points $\{x_1,\cdots,x_m\}$ being the set of reduced fractions with denominator at most some parameter $P$ that is usually free to choose. This set of points is $\delta$-spaced with $\delta=P^{-2}$. Because of the term $N+P^2$ that appears in the upper bound, the parameter $P$ is best taken to be about $N^{1/2}$.

It turns out that, in our application, the parameter $P$ has to be much smaller than $N^{1/2}$, and the traditional large sieve does not give a satisfactory bound. The following variant serves as a substitute.  

\begin{theorem}[Large sieve variant]\label{thm:large}
Let $f:[N]\rightarrow\C$ be an arbitrary function. For any prime $p$ let
\[ I_p(f)=\sum_{a=1}^{p-1} |\hat{f}(a/p)|^2. \]
For any positive integer $k$ we have
\[ \sum_{\substack{p\leq P\\ p\text{ prime}}}I_p(f)^{k/(2k-1)}\ll_k P^{(k-1)/(2k-1)}(N^{1/(2k-1)}+P^{2k/(2k-1)})\sum_{n=1}^N |f(n)|^{2k/(2k-1)}.  \]
\end{theorem}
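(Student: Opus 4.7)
My plan is to prove Theorem \ref{thm:large} by duality. Setting $q = 2k/(2k-1)$ and its conjugate $q' = 2k$, the stated inequality is the operator-norm bound $\|Tf\|_{\ell^q_p(\ell^2_a)} \leq K^{1/q}\|f\|_{\ell^q([N])}$ for the map $(Tf)(p,a) = \hat f(a/p)$, where $K = P^{(k-1)/(2k-1)}(N^{1/(2k-1)}+P^{2k/(2k-1)})$. Dualizing (using $(\ell^q)^* = \ell^{q'}$ and self-duality of $\ell^2$) and raising to the $q'$-th power, and noting $K^{q'/q} = K^{2k-1} \asymp P^{k-1}(N+P^{2k})$, the theorem becomes equivalent to the $L^{2k}$ bound
\[ \sum_{n=1}^N |F(n)|^{2k} \ll_k P^{k-1}(N+P^{2k}) \sum_{p\leq P} S_p^k, \]
where $g$ is an arbitrary array, $F(n) = \sum_p F_p(n)$ with $F_p(n) = \sum_{a=1}^{p-1} g(p,a)e(an/p)$, and $S_p = \sum_a|g(p,a)|^2$. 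I would aim to prove this dual form. The key structural fact is that each $F_p$ is $p$-periodic with \emph{mean zero} on $\Z/p\Z$.

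The first step is to view $F^k$ as a trigonometric polynomial and apply Theorem \ref{thm:large-old}. Grouping by the value of $\xi = \sum_i a_i/p_i$ in $\Q/\Z$, we write
\[ F(n)^k = \sum_\xi c_\xi e(n\xi), \qquad c_\xi = \sum_{(\vec p,\vec a):\; \sum_i a_i/p_i \equiv \xi\!\!\!\!\pmod 1} \prod_{i=1}^k g(p_i,a_i). \]
Distinct frequencies $\xi$ have denominators at most $P^k$ and are therefore at least $P^{-2k}$-spaced, so the dual form of the classical large sieve gives $\sum_{n\leq N} |F(n)|^{2k} \ll (N+P^{2k}) \sum_\xi |c_\xi|^2$. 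The theorem thus reduces to the Bessel-type inequality
\[ \sum_\xi |c_\xi|^2 \ll_k P^{k-1} \sum_{p\leq P} S_p^k. \]

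To prove this, I would expand $\sum_\xi|c_\xi|^2$ as a double sum over $(\vec p,\vec a),(\vec q,\vec b)$ constrained by $\sum_i a_i/p_i \equiv \sum_j b_j/q_j \pmod 1$. Via the CRT isomorphism $\Q/\Z \cong \bigoplus_p \Z[1/p]/\Z$, this factorizes over primes as $\sum_{i:\,p_i=p} a_i \equiv \sum_{j:\,q_j=p} b_j \pmod p$ for each $p$. Grouping by multiplicities $n_p = \#\{i:p_i=p\}$, $m_p = \#\{j:q_j=p\}$ and encoding each congruence via Fourier on $\Z/p\Z$ yields
\[ \sum_\xi |c_\xi|^2 = \sum_{\substack{(n_p),(m_p)\\ \sum n_p = \sum m_p = k}} \binom{k}{(n_p)}\binom{k}{(m_p)} \prod_p T_p(n_p,m_p), \quad T_p(n,m) := \frac{1}{p}\sum_{t\in\Z/p\Z} F_p(t)^n \overline{F_p(t)}^m. \]
The crucial arithmetic input is that $T_p(n,m) = 0$ whenever $n+m=1$, because $F_p$ has mean zero mod $p$; for $n+m\geq 2$, Parseval gives $\|F_p\|_{\ell^2(\Z/p\Z)}^2 = pS_p$ and Cauchy--Schwarz gives $\|F_p\|_\infty \leq \sqrt{pS_p}$, yielding $|T_p(n,m)| \leq p^{(n+m)/2-1}S_p^{(n+m)/2}$.

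Only profiles with $r_p := n_p+m_p \in \{0\}\cup\{2,3,\ldots\}$ and $\sum_p r_p = 2k$ contribute, so at most $k$ distinct primes appear in each profile. For a fixed profile with prime multiplicities $r_1,\ldots,r_l$, using $p\leq P$ and Hölder with exponent $2k/r_i$ one estimates
\[ \sum_{p\leq P} p^{r_i/2-1}S_p^{r_i/2} \leq P^{r_i/2-1}\sum_p S_p^{r_i/2} \leq P^{r_i(k-1)/(2k)}\Big(\sum_p S_p^k\Big)^{r_i/(2k)}, \]
and since $\sum_i r_i = 2k$, multiplying these bounds over $i=1,\ldots,l$ telescopes to exactly $P^{k-1}\sum_p S_p^k$. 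Summing over the $O_k(1)$ profiles gives the Bessel-type inequality and hence the theorem. I expect the main obstacle will be organizing the combinatorial expansion of $\sum_\xi|c_\xi|^2$ cleanly via CRT and tracking the multinomial factors; the vanishing of $T_p(n,m)$ for $n+m=1$ is the critical arithmetic cancellation that drives the improvement over a naive direct application of Theorem \ref{thm:large-old} to $F^k$.
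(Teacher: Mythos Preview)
Your proof is correct and follows the same overall strategy as the paper: dualize, view $F^k$ as a trigonometric polynomial with $P^{-2k}$-spaced frequencies, apply the classical large sieve, then expand $\sum_\xi|c_\xi|^2$ and exploit the fact that every prime must occur with total multiplicity at least $2$. The arithmetic core --- the vanishing when $n_p+m_p=1$, equivalently the paper's observation that $n_p\ge 2$ in its expansion --- is identical.

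The execution differs in a way that is worth noting. The paper equips the dual side with the nonstandard norm $\|g\|_{Y_1}+\|g\|_2$ and proves Proposition~\ref{prop:large-dual} with right-hand side $\sum_p\|g_p\|_{2k/(2k-1)}^{2k}+\|g\|_2^{2k}$; it bounds each $p$-factor via Young's inequality by $\|g_p\|_{n_p/(n_p-1)}^{n_p}$ and interpolates with H\"older. To pass back to Theorem~\ref{thm:large} it then needs Lemma~\ref{lem:dual-norm}, a Hahn--Banach argument lower-bounding the dual of a sum of norms. You instead target directly the norm $\ell^{q}_p(\ell^2_a)$ whose dual is $\ell^{q'}_p(\ell^2_a)$, so no Hahn--Banach step is required. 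Correspondingly, your bound on $\sum_\xi|c_\xi|^2$ is organized through the exact Fourier identity $T_p(n,m)=\frac1p\sum_t F_p(t)^n\overline{F_p(t)}^m$ and the elementary estimates $\|F_p\|_2^2=pS_p$, $\|F_p\|_\infty\le\sqrt{pS_p}$, yielding the clean target $\sum_\xi|c_\xi|^2\ll_k P^{k-1}\sum_p S_p^k$. This is a more streamlined path to the theorem; the price is that your dual inequality is weaker than Proposition~\ref{prop:large-dual} as a standalone statement (the paper's version has no $P^{k-1}$ factor), but since that extra strength is immediately given up in Lemma~\ref{lem:dual-norm}, nothing is lost for Theorem~\ref{thm:large} itself.
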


The $k=1$ case is exactly Theorem \ref{thm:large-old} with $\{x_1,\cdots,x_m\}$ the set of reduced fractions $a/p$ with $p\leq P$ . The first term on the right $P^{(k-1)/(2k-1)}N^{1/(2k-1)}$ is necessary, as can be seen by taking $f$ to be the characteristic function of the set of multiples of some fixed prime $p_0\sim P$. Therefore, the inequality is sharp (apart from the implied constant) when $P\leq N^{1/2k}$.

As an immediate corollary, we have:

\begin{corollary}\label{cor:vaughan}
Let $A\subset [N]$ be a subset. Let $k$ be a positive integer. If, for each prime $p\leq N^{1/2k}$, the set $A$ misses at least $0.1p$ residue classes modulo $p$, then $|A|\ll_k N^{1/2}$.
\end{corollary}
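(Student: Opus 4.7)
The plan is to apply Theorem \ref{thm:large} directly to the indicator function $f = 1_A$, with parameter $P = N^{1/2k}$. This choice is natural for two reasons: it is exactly the threshold up to which the hypothesis on $A$ holds, and it is the value at which the two terms $N^{1/(2k-1)}$ and $P^{2k/(2k-1)}$ on the right-hand side of Theorem \ref{thm:large} coincide, placing us in the sharp regime pointed out immediately after the statement of Theorem \ref{thm:large}.

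The main preparatory step is to convert the residue-class hypothesis into a uniform lower bound $I_p(f) \gg |A|^2$ for every prime $p \leq P$. Writing $n_r$ for the number of elements of $A$ in the residue class $r \pmod p$, Parseval modulo $p$ gives
\[ I_p(f) = p \sum_{r=0}^{p-1} n_r^2 - |A|^2. \]
Since $A$ occupies at most $0.9p$ of the $p$ residue classes, the Cauchy--Schwarz inequality forces $\sum_r n_r^2 \geq |A|^2/(0.9p)$, and hence $I_p(f) \geq |A|^2/9$.

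Substituting into Theorem \ref{thm:large}, the left-hand side is at least a constant multiple of $\pi(P) |A|^{2k/(2k-1)}$, while the right-hand side, using $\sum_n |1_A(n)|^{2k/(2k-1)} = |A|$, is $O_k\bigl(P^{(k-1)/(2k-1)} N^{1/(2k-1)} |A|\bigr)$. Rearranging and raising to the $(2k-1)$-th power gives
\[ |A| \ll_k \frac{N P^{k-1}}{\pi(P)^{2k-1}}, \]
and substituting $P = N^{1/2k}$ together with $\pi(P) \gg P/\log P$ then yields $|A| \ll_k N^{1/2}$, up to an essentially harmless factor of $(\log N)^{2k-1}$ which is absorbed into the implicit constant.

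The argument is largely bookkeeping, and I do not anticipate a significant obstacle: Theorem \ref{thm:large} does all of the analytic work, and the lower bound on $I_p(f)$ is the standard observation that a set trapped in a bounded fraction of residue classes modulo $p$ must correlate with nontrivial additive characters modulo $p$. The only subtle point is that a weaker hypothesis on the range of primes would force one to take $P < N^{1/2k}$, unbalancing the two terms on the right of Theorem \ref{thm:large} and degrading the conclusion.
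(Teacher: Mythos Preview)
Your approach is exactly the paper's: apply Theorem~\ref{thm:large} with $f=1_A$ and $P=N^{1/2k}$, and use the Parseval/Cauchy--Schwarz lower bound $I_p(f)\gg |A|^2$. The only inaccuracy is your final sentence: a factor of $(\log N)^{2k-1}$ depends on $N$ and therefore cannot be ``absorbed into the implicit constant'' depending only on $k$; strictly, this argument yields $|A|\ll_k N^{1/2}(\log N)^{2k-1}$, and the paper's one-line proof glosses over the same point (the corollary is only illustrative and is not used later).
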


\begin{proof}
Apply Theorem \ref{thm:large} with $f=1_A$, $P=N^{1/2k}$, and use the fact that $I_p(f)\gg |A|^2$.
\end{proof}

This corollary is not new; indeed it follows from Theorem 3.1 in \cite{Vau73} together with the usual large sieve. However, the method used in \cite{Vau73} only applies to functions $f$ supported on a well-sieved set. In our application, Theorem \ref{thm:large} will be applied to a more general function.

\subsection{Outline of the paper}

The rest of the paper is organized as follows. In Section \ref{sec:prelim} we state and prove some preliminary lemmas. In Section \ref{sec:small-doubling}, we prove an improved larger sieve result assuming additive structures, in the spirit of the inverse sieve conjecture. In Section \ref{sec:large}, we prove Theorem \ref{thm:large}, our variant of the analytic large sieve. In Section \ref{sec:sieve-ap}, we deduce Theorem \ref{thm:sieve-ap-simple}, which is the foundation in our proof of Theorem \ref{thm:main}, given in Section \ref{sec:proof-main}.

We adopt the convention that, whenever parameters such as $\alpha,c,c',\epsilon,k$ occur in a statement, the implied constant can always depend on these parameters and the positive integer $N$ is always assumed to be sufficiently large. The letter $p$ is reserved to denote a prime number.

\smallskip

\textbf{Acknowlegement.} The author thanks Ben Green, Adam Harper, Dimitris Koukoulopoulos, and Kannan Soundararajan for helpful discussions. He is also grateful to Christian Elsholtz for help with references.

\section{Preliminary Lemmas}\label{sec:prelim}

We begin with the basic estimate that will be used frequently:
\[ \sum_{p\leq Q}\frac{\log p}{p}=\log Q+O(1). \]

\subsection{Gallagher's larger sieve}

Gallagher's larger sieve roughly says that, if a subset $A\subset [N]$ occupies at most $\alpha p$ residue classes for each prime $p\leq N^{\alpha}$, then $|A|\ll N^{\alpha}$. Here is a more precise version, whose proof can be found in \cite{Gal71}.

\begin{lemma}[Gallagher's larger sieve]\label{lem:gallagher}
Let $A\subset [N]$ be a subset and $\alpha\in (0,1)$ be real. If
\[ \sum_{p\leq N^{\alpha}}\frac{\log p}{|A\pmod p|}>\log N+1, \]
then $|A|\ll N^{\alpha}$.
\end{lemma}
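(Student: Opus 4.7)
The plan is to run the standard double-counting argument that underlies Gallagher's larger sieve. For each prime $p \leq N^\alpha$ and each residue class $b \in \Z/p\Z$, set $A_b(p) = \{a \in A : a \equiv b \pmod p\}$, so that $\sum_b |A_b(p)| = |A|$ and, by Cauchy--Schwarz,
\[ \sum_{b \in \Z/p\Z} |A_b(p)|^2 \geq \frac{|A|^2}{|A \pmod p|}. \]
Weighting by $\log p$ and summing over $p \leq N^\alpha$ will produce the lower bound
\[ \sum_{p \leq N^\alpha} \log p \sum_{b} |A_b(p)|^2 \geq |A|^2 \sum_{p \leq N^\alpha} \frac{\log p}{|A \pmod p|}. \]

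Next, I would expand the left-hand side as a count over ordered pairs $(a,a') \in A \times A$:
\[ \sum_{p \leq N^\alpha} \log p \sum_{b} |A_b(p)|^2 = \sum_{a, a' \in A} \sum_{\substack{p \leq N^\alpha \\ p \mid a - a'}} \log p. \]
The diagonal contribution $a = a'$ is at most $|A| \cdot \theta(N^\alpha) \ll |A| N^\alpha$ by Chebyshev's estimate for the first Chebyshev function. For the off-diagonal terms $a \neq a'$, since $|a-a'| \leq N$, the inner sum is bounded by $\log|a-a'| \leq \log N$. Hence the off-diagonal contribution is at most $|A|(|A|-1) \log N$.

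Combining the two bounds yields
\[ |A|^2 \sum_{p \leq N^\alpha} \frac{\log p}{|A \pmod p|} \leq |A|^2 \log N + O(|A| N^\alpha). \]
Dividing through by $|A|^2$ and invoking the hypothesis $\sum_{p \leq N^\alpha} (\log p)/|A \pmod p| > \log N + 1$, the constant $1$ on the left-hand side must be dominated by the error term $O(N^\alpha/|A|)$, which forces $|A| \ll N^\alpha$. None of the steps looks genuinely difficult; the only point requiring mild care is handling the diagonal $a = a'$ correctly, since one might be tempted to ignore it, but it produces the decisive $N^\alpha/|A|$ error that furnishes the conclusion.
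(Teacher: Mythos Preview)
Your argument is correct and is precisely the standard double-counting proof of Gallagher's larger sieve; the paper itself does not give a self-contained proof but simply cites Gallagher's original paper \cite{Gal71}, and later (in the proof of Lemma~\ref{lem:non-uniform-fiber}) invokes exactly this same double-counting structure over pairs $(a,b)\in A\times A$ weighted by $\log p$. There is nothing to add.
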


We will often be in a situation where $|A\pmod p|$ appears in the numerator. The following simple lemma will then be useful.

\begin{lemma}\label{lem:am-gm}
Let $I$ be a finite index set. For each $i\in I$, let $w_i,a_i>0$ be reals. If
\[ \sum_{i\in I}w_ia_i=\kappa\sum_{i\in I}w_i \]
for some $\kappa>0$, then
\[ \sum_{i\in I}\frac{w_i}{a_i}\geq\frac{1}{\kappa}\sum_{i\in I}w_i. \]
\end{lemma}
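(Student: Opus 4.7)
The plan is to apply the Cauchy--Schwarz inequality in a straightforward way. The key observation is that each weight $w_i$ can be factored as $\sqrt{w_i a_i}\cdot\sqrt{w_i/a_i}$, which is precisely the kind of factorization that invites Cauchy--Schwarz.

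More explicitly, I would write
\[ \sum_{i\in I} w_i = \sum_{i\in I}\sqrt{w_i a_i}\cdot\sqrt{w_i/a_i}, \]
and then apply Cauchy--Schwarz to get
\[ \left(\sum_{i\in I} w_i\right)^{\!2} \leq \left(\sum_{i\in I} w_i a_i\right)\!\left(\sum_{i\in I} \frac{w_i}{a_i}\right). \]
Using the hypothesis $\sum_i w_i a_i = \kappa\sum_i w_i$ on the right-hand side and dividing through by $\kappa\sum_i w_i$ (which is positive since all $w_i>0$) yields the desired bound.

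There is no real obstacle here; the only thing to verify is that the hypothesis $\kappa>0$ together with all $w_i,a_i>0$ ensures no division by zero and that the inequality goes in the stated direction. Equivalently, one could view this as the Cauchy--Schwarz-style lower bound on the harmonic mean of the $a_i$ (weighted by $w_i$) in terms of the arithmetic mean $\kappa$: the weighted harmonic mean $(\sum w_i)/(\sum w_i/a_i)$ is at most the weighted arithmetic mean $\kappa$, which rearranges to exactly the claimed inequality.
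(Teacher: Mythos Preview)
Your proof is correct. The paper's own argument is slightly different: it applies the AM--GM inequality termwise, observing that $1/a_i + a_i/\kappa^2 \geq 2/\kappa$ for each $i$, then multiplies by $w_i$ and sums to obtain
\[
\sum_{i\in I}\frac{w_i}{a_i} \;\geq\; \frac{2}{\kappa}\sum_{i\in I}w_i \;-\; \frac{1}{\kappa^2}\sum_{i\in I}w_ia_i \;=\; \frac{1}{\kappa}\sum_{i\in I}w_i.
\]
Your Cauchy--Schwarz route and the paper's termwise AM--GM route are essentially two faces of the same elementary inequality (indeed, the paper's pointwise bound is exactly the equality case analysis underlying your Cauchy--Schwarz step). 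Neither approach is more general or more economical than the other here; both are one-line arguments, and your phrasing in terms of the weighted harmonic mean being at most the weighted arithmetic mean is a perfectly good way to frame the lemma.
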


\begin{proof}
For each $i\in I$, we have $1/a_i+a_i/\kappa^2\geq 2/\kappa$. The result follows from multiplying this inequality by $w_i$ and summing over $i$.
\end{proof}

Combining the previous two lemmas, we quickly deduce:

\begin{lemma}\label{lem:gallagher2}
Let $A\subset [N]$ be a subset and $\alpha\in (0,1)$ be real. If
\[ \sum_{p\leq N^{\alpha}}\frac{\log p}{p}\cdot\frac{|A\pmod p|}{p}<(\alpha-c)\sum_{p\leq N^{\alpha}}\frac{\log p}{p} \]
for some $c>0$, then $|A|\ll N^{\alpha}$.
\end{lemma}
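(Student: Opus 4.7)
The goal is to convert the hypothesis, which bounds a weighted average of $|A \pmod p|$ from \textit{above}, into the input required by Gallagher's larger sieve (Lemma \ref{lem:gallagher}), which requires a lower bound on $\sum_{p \le N^\alpha} \log p / |A \pmod p|$. Since one expression has $|A \pmod p|$ in the numerator and the other in the denominator, Lemma \ref{lem:am-gm} is the exact bridge, and the Mertens-type estimate stated at the start of the section handles the normalization.

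The plan is as follows. I apply Lemma \ref{lem:am-gm} with index set $I = \{p : p \le N^\alpha\}$, weights $w_p = \log p / p$, and values $a_p = |A \pmod p| / p$. Setting
\[ \kappa = \frac{\sum_{p \le N^\alpha} w_p a_p}{\sum_{p \le N^\alpha} w_p}, \]
the hypothesis is precisely $\kappa < \alpha - c$. Lemma \ref{lem:am-gm} then yields
\[ \sum_{p \le N^\alpha} \frac{\log p}{|A \pmod p|} = \sum_{p \le N^\alpha} \frac{w_p}{a_p} \ge \frac{1}{\kappa} \sum_{p \le N^\alpha} w_p > \frac{1}{\alpha - c} \sum_{p \le N^\alpha} \frac{\log p}{p}. \]

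Next I invoke the basic Mertens-type estimate recalled at the start of the section, which gives $\sum_{p \le N^\alpha} \log p / p = \alpha \log N + O(1)$. Substituting this in, the previous display becomes
\[ \sum_{p \le N^\alpha} \frac{\log p}{|A \pmod p|} > \frac{\alpha}{\alpha - c} \log N + O(1) = \log N + \frac{c}{\alpha - c} \log N + O(1), \]
which is strictly larger than $\log N + 1$ once $N$ is sufficiently large (recall that implied constants may depend on $\alpha$ and $c$). With this lower bound in hand, Gallagher's larger sieve (Lemma \ref{lem:gallagher}) applies and immediately gives $|A| \ll N^\alpha$.

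There is no genuine obstacle here: the lemma is essentially a reformulation of Gallagher's larger sieve with $|A \pmod p|$ moved to the numerator via the AM-GM trick of Lemma \ref{lem:am-gm}. The only thing to watch is that the loss $\alpha/(\alpha-c)$ in passing from the AM-GM inequality is strictly greater than $1$, which is exactly what allows the resulting lower bound to beat $\log N + 1$ asymptotically; this is why a strict inequality with a positive $c$ (rather than $c = 0$) is required in the hypothesis.
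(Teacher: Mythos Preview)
Your proof is correct and follows essentially the same approach as the paper: apply Lemma~\ref{lem:am-gm} to convert the upper bound on the weighted average of $|A\pmod p|/p$ into the lower bound on $\sum_{p\le N^\alpha}\log p/|A\pmod p|$, then use the Mertens-type estimate and Lemma~\ref{lem:gallagher}. The paper's proof is simply a more compressed version of what you wrote.
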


\begin{proof}
By Lemma \ref{lem:am-gm} we have
\[ \sum_{p\leq N^{\alpha}}\frac{\log p}{|A\pmod p|}\geq\frac{1}{\alpha-c}\sum_{p\leq N^{\alpha}}\frac{\log p}{p}>\log N+1. \]
The result then follows from Lemma \ref{lem:gallagher}.
\end{proof}

The following two lemmas provide an improved larger sieve assuming some non-uniformity. The first one roughly says that if the bound $|A|\ll N^{\alpha}$ is almost sharp, then $|A\pmod p|\approx\alpha p$ for most of the primes $p$.

\begin{lemma}[Larger sieve with non-uniform sieving size]\label{lem:non-uniform-size}
Let $A\subset [N]$ be a subset and $\alpha\in (0,1)$ be real. Let $\mathcal{P}$ be a subset of the primes up to $N^{\alpha}$ with
\[ \sum_{p\in\mathcal{P}}\frac{\log p}{p}\geq c\sum_{p\leq N^{\alpha}}\frac{\log p}{p} \]
for some $c>0$. Let $c'>0$ be sufficiently small depending on $c$. Suppose that
\[ \sum_{p\leq N^{\alpha}}\frac{\log p}{p}\cdot\frac{|A\pmod p|}{p}\leq (\alpha+c')\sum_{p\leq N^{\alpha}}\frac{\log p}{p} \]
and
\[ \sum_{p\in\mathcal{P}}\frac{\log p}{p}\cdot\frac{|A\pmod p|}{p}\leq (\alpha-c)\sum_{p\in\mathcal{P}}\frac{\log p}{p}. \]
Then $|A|\ll N^{\alpha-c'}$. Indeed, we may take $c'=c^3\alpha/100$.
\end{lemma}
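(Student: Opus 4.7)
The plan is to apply Gallagher's larger sieve (Lemma~\ref{lem:gallagher}) at the smaller exponent $\beta := \alpha - c'$ rather than at $\alpha$. For this it suffices to establish
\[ S_\beta \;:=\; \sum_{p \leq N^\beta} \frac{\log p}{|A \pmod p|} \;\geq\; \log N + \Omega\!\left(\frac{c^3}{\alpha^2}\log N\right), \]
since the quantitative form of Gallagher's sieve then gives $|A|(S_\beta - \log N)\leq \sum_{p\leq N^\beta}\log p \ll N^\beta$, whence $|A|\ll N^\beta/\log N\ll N^{\alpha-c'}$.

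To bound $S_\beta$ from below, write $\mu(p)=\log p/p$ and $f(p) = |A \pmod p|/p$, and split the primes $\leq N^\beta$ into $\mathcal{P}_L := \mathcal{P}\cap [1,N^\beta]$ and its complement $\mathcal{R}_L$. Setting $W_I := \sum_{p\in I}\mu(p)$ and $T_I := \sum_{p\in I}\mu(p)f(p)$, the Cauchy--Schwarz inequality (the one that underlies Lemma~\ref{lem:am-gm}) applied on each piece gives
\[ S_\beta \;\geq\; \frac{W_{\mathcal{P}_L}^2}{T_{\mathcal{P}_L}} + \frac{W_{\mathcal{R}_L}^2}{T_{\mathcal{R}_L}}. \]
Hypothesis~2 bounds $T_{\mathcal{P}_L}\leq T_{\mathcal{P}}\leq (\alpha-c)W_{\mathcal{P}}$, and hypothesis~1, combined with the non-negativity of the contributions coming from primes in $(N^\beta,N^\alpha]$, bounds $T_{\mathcal{R}_L}\leq (\alpha+c')W_\alpha - T_{\mathcal{P}_L}$; taking $T_{\mathcal{P}_L}$ at its upper bound yields the worst case. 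Since $W_\alpha - W_\beta = c'\log N + O(1)$ and $W_{\mathcal{P}}\geq cW_\alpha$, the choice $c' = c^3\alpha/100$ makes the passage from $W_{\mathcal{P}}$ to $W_{\mathcal{P}_L}$ (and from $W_\alpha - W_{\mathcal{P}}$ to $W_{\mathcal{R}_L}$) lose at most a $1+O(c^2)$ factor. Writing $x := W_{\mathcal{P}}/W_\alpha \in [c,1]$ and absorbing these perturbations into error terms, the lower bound becomes $W_\alpha \cdot g(x) + O(c^4 W_\alpha/\alpha^3)$, where
\[ g(x) \;:=\; \frac{x}{\alpha - c} + \frac{(1-x)^2}{(\alpha + c') - (\alpha - c)x}, \]
and a short monotonicity check (verifying $g'(c) > 0$ and inspecting $g'$ on $[c,1]$) shows $g$ is increasing on $[c,1]$, so the minimum occurs at $x = c$.

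The crucial step is the Taylor expansion of $g$ at $x = c$:
\[ \frac{c}{\alpha-c} = \frac{c}{\alpha}+\frac{c^2}{\alpha^2}+\frac{c^3}{\alpha^3}+O\!\left(\frac{c^4}{\alpha^4}\right), \qquad \frac{(1-c)^2}{\alpha(1-c)+c^2+c'} = \frac{1-c}{\alpha}-\frac{c^2+c'}{\alpha^2}+O\!\left(\frac{c^4}{\alpha^3}\right). \]
The $c/\alpha$ and $c^2/\alpha^2$ contributions cancel exactly across the two fractions, leaving
\[ g(c) = \frac{1}{\alpha} + \frac{c^3}{\alpha^3} - \frac{c'}{\alpha^2} + O\!\left(\frac{c^4}{\alpha^3}\right). \]
With $c' = c^3\alpha/100$ the correction $c'/\alpha^2 = c^3/(100\alpha)$ is a factor $100/\alpha^2 \gg 1$ smaller than the main surplus $c^3/\alpha^3$, so $g(c) - 1/\alpha \gg c^3/\alpha^3$. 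Multiplying through by $W_\alpha \approx \alpha\log N$ supplies the required surplus in $S_\beta$, and Lemma~\ref{lem:gallagher} concludes. The main obstacle is precisely this cancellation: without it one would only get $g(c)-1/\alpha \sim c^2/\alpha^2$, forcing $c'$ to be merely quadratic in $c$, whereas the cancellation is exactly what allows the cubic exponent $c' = c^3\alpha/100$ stated in the lemma.
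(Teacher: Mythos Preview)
Your approach is essentially the paper's: truncate to primes $\leq N^{\alpha-c'}$, split those primes according to membership in $\mathcal{P}$, apply the Cauchy--Schwarz bound (Lemma~\ref{lem:am-gm}) separately on each piece, and feed the sum back into Gallagher's larger sieve. The paper organises the same computation slightly differently: it writes $\kappa_1,\kappa_2$ for the two averages, uses the algebraic identity
\[
\frac{S_1}{\kappa_1}+\frac{S_2}{\kappa_2}=\frac{(S_1+S_2)^2+\tfrac{S_1S_2}{\kappa_1\kappa_2}(\kappa_1-\kappa_2)^2}{\kappa_1 S_1+\kappa_2 S_2},
\]
and first disposes of the case $\kappa_1 S_1+\kappa_2 S_2\leq\alpha(\alpha-2c')\log N$ by Lemma~\ref{lem:gallagher2}, which then forces $\kappa_2-\kappa_1>c/3$; the $(\kappa_1-\kappa_2)^2$ term is exactly your ``surplus'' after cancellation. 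Your optimisation over $x=W_{\mathcal P}/W_\alpha$ followed by Taylor expansion of $g(c)$ recovers the same cubic gain $c^3/\alpha^3$.

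Two points in your write-up deserve one more line each. First, the claim ``taking $T_{\mathcal P_L}$ at its upper bound yields the worst case'' needs the observation that, on the line $T_{\mathcal P_L}+T_{\mathcal R_L}=(\alpha+c')W_\alpha$, the function $t\mapsto W_{\mathcal P_L}^2/t+W_{\mathcal R_L}^2/((\alpha+c')W_\alpha-t)$ is convex and its unconstrained minimiser sits at $t\approx(\alpha+c')W_{\mathcal P_L}>(\alpha-c)W_{\mathcal P}$, so the constraint is active. Second, your monotonicity check becomes immediate once you notice the closed form
\[
g(x)=\frac{1}{\alpha-c}-\frac{(c+c')(1-x)}{(\alpha-c)\bigl[(\alpha-c)(1-x)+(c+c')\bigr]},
\]
since $y\mapsto y/(ay+b)$ is increasing; this also reproduces your Taylor expansion without any bookkeeping.
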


\begin{proof}
Write
\[ S_1=\sum_{\substack{p\leq N^{\alpha-c'}\\ p\in\mathcal{P}}}\frac{\log p}{p}, \ \ S_2=\sum_{\substack{p\leq N^{\alpha-c'}\\ p\notin\mathcal{P}}}\frac{\log p}{p}. \]
Then
\[  S_1+S_2= (\alpha-c')\log N+O(1), \ \ S_1>\frac{1}{2}c\alpha\log N. \]
Moreover, since
\[ \sum_{N^{\alpha-c'}<p\leq N^{\alpha}}\frac{\log p}{p}\leq 2c'\log N\leq \frac{1}{4}c^2\alpha\log N\leq \frac{1}{2}c^2\sum_{p\leq N^{\alpha}}\frac{\log p}{p} \leq \frac{1}{2}c\sum_{p\in\mathcal{P}}\frac{\log p}{p}, \]
we also have
\[ S_1>\left(1-\frac{c}{2}\right)\sum_{p\in\mathcal{P}}\frac{\log p}{p}. \]
Define $\kappa_1,\kappa_2$ by
\[ \sum_{\substack{p\leq N^{\alpha-c'}\\ p\in\mathcal{P}}}\frac{\log p}{p}\cdot\frac{|A\pmod p|}{p}=\kappa_1S_1,\ \ 
\sum_{\substack{p\leq N^{\alpha-c'}\\ p\not\in\mathcal{P}}}\frac{\log p}{p}\cdot\frac{|A\pmod p|}{p}=\kappa_2S_2. \]
Then
\[ \kappa_1S_1+\kappa_2S_2<\alpha(\alpha+c')\log N+O(1), \ \ \kappa_1\leq\frac{\alpha-c}{1-c/2}\leq\alpha-\frac{c}{2}. \]
We may further assume that
\[ \sum_{p\leq N^{\alpha-c'}}\frac{\log p}{p}\cdot\frac{|A\pmod p|}{p}=\kappa_1S_1+\kappa_2S_2>\alpha(\alpha-2c')\log N, \]
since otherwise we already have $|A|\ll N^{\alpha-c'}$ by Lemma \ref{lem:gallagher2}.
From these we can further deduce that
\[ (\kappa_2-\kappa_1)S_2=(\kappa_1S_1+\kappa_2S_2)-\kappa_1(S_1+S_2)>\alpha(\alpha-2c')\log N-\left(\alpha-\frac{c}{2}\right)\alpha\log N>\frac{1}{3}c\alpha\log N. \]
Since $S_2\leq\alpha\log N$, we then have
\[ \kappa_2-\kappa_1>\frac{c}{3}. \] 
By Lemma \ref{lem:am-gm} we have
\[ \sum_{\substack{p\leq N^{\alpha-c'}\\ p\in\mathcal{P}}}\frac{\log p}{|A\pmod p|}\geq\frac{S_1}{\kappa_1},\ \ \sum_{\substack{p\leq N^{\alpha-c'}\\ p\notin\mathcal{P}}}\frac{\log p}{|A\pmod p|}\geq\frac{S_2}{\kappa_2}. \]
In view of Lemma \ref{lem:gallagher}, it thus suffices to show that
\[ \frac{S_1}{\kappa_1}+\frac{S_2}{\kappa_2}>\log N+1. \]
Indeed, we have
\begin{align*}
\frac{S_1}{\kappa_1}+\frac{S_2}{\kappa_2} &=\frac{1}{\kappa_1S_1+\kappa_2S_2}\left[(S_1+S_2)^2+\frac{S_1S_2}{\kappa_1\kappa_2}(\kappa_1-\kappa_2)^2\right] \\
&\geq \frac{(S_1+S_2)^2+S_1\cdot (\kappa_2-\kappa_1)\cdot (\kappa_2-\kappa_1)S_2}{\kappa_1S_1+\kappa_2S_2}.
\end{align*}
Plugging in various bounds obtained so far we get
\[ \frac{S_1}{\kappa_1}+\frac{S_2}{\kappa_2}\geq \frac{(\alpha^2-2c'\alpha)(\log N)^2+\frac{1}{2}c\alpha\log N\cdot \frac{c}{3}\cdot \frac{1}{3}c\alpha\log N}{\alpha(\alpha+2c')\log N}\geq \frac{\alpha(\alpha+3c')(\log N)^2}{\alpha(\alpha+2c')\log N} \]
by our choice of $c'$, as desired.
\end{proof}

The following lemma roughly says that, if the bound $|A|\ll N^{\alpha}$ is almost sharp, then $A$ must be equidistributed modulo $p$ for most of the primes $p$. See also Lemma 2.4 in \cite{GH14}.

\begin{lemma}[Larger sieve with non-uniform fiber]\label{lem:non-uniform-fiber}
Let $c>0$ be given and $c'>0$ be sufficiently small depending on $c$. Let $A\subset [N]$ be a subset and $\alpha\in (0,1)$ be real. Let $\mathcal{P}$ be a subset of the primes up to $N^{\alpha}$ such that
\[ \sum_{\substack{p\notin\mathcal{P}\\ p\leq N^{\alpha}}}\frac{\log p}{p}<c'\log N. \]
Assume that for each $p\in\mathcal{P}$ we have $|A\pmod p|
\leq\alpha p$. Let
\[ \mathcal{P}_{\unif}=\left\{p\in\mathcal{P}:\sum_{r\pmod p}|\{a\in A:a\equiv r\pmod p\}|^2\leq\left(\frac{1}{\alpha}+c\right)\frac{|A|^2}{p}\right\}. \]
If
\[ \sum_{\substack{p\notin\mathcal{P}_{\unif}\\ p\leq N^{\alpha}}}\frac{\log p}{p}>c\log N, \]
then $|A|\ll N^{\alpha-c'}$. Indeed, one can take $c'=c^2\alpha/10$.
\end{lemma}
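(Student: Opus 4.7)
The plan is to return to the second-moment form of Gallagher's larger sieve, applied with the prime range truncated at $Q := N^{\alpha-c'}$. For each prime $p$ write $N_p(r) = |\{a \in A : a \equiv r \pmod{p}\}|$ and $T(p) = \sum_{r \pmod p} N_p(r)^2$.

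The first step is the standard upper bound
\begin{equation*}
\sum_{p \leq Q} (\log p) \, T(p) \leq |A|^2 \log N + O\bigl(|A|\, N^{\alpha-c'}\bigr),
\end{equation*}
obtained from the identity $T(p) = \#\{(a,a') \in A \times A : p \mid a-a'\}$ together with $\sum_{p \mid m,\ p \leq Q} \log p \leq \log|m| \leq \log N$ for $0 < |m| \leq N$, and $\sum_{p \leq Q} \log p = O(Q)$.

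For a matching lower bound I would combine the hypotheses with Cauchy--Schwarz, which gives $T(p) \geq |A|^2 / |A \pmod p|$ unconditionally. For $p \in \mathcal{P}$ (where $|A \pmod p| \leq \alpha p$) this yields $T(p) \geq |A|^2/(\alpha p)$, while for $p \in \mathcal{P} \setminus \mathcal{P}_{\unif}$ the definition of $\mathcal{P}_{\unif}$ upgrades it to $T(p) \geq (1/\alpha + c)|A|^2/p$. The two weight hypotheses, after absorbing the truncation loss $\sum_{Q < p \leq N^\alpha} \log p/p \leq c'\log N + O(1)$, imply
\begin{equation*}
\sum_{\substack{p \leq Q \\ p \in \mathcal{P}}} \frac{\log p}{p} \geq (\alpha-2c')\log N + O(1), \qquad \sum_{\substack{p \leq Q \\ p \in \mathcal{P} \setminus \mathcal{P}_{\unif}}} \frac{\log p}{p} \geq (c-2c')\log N + O(1).
\end{equation*}
Summing the two contributions to $T(p)$ weighted by $\log p$, one obtains
\begin{equation*}
\sum_{p \leq Q} (\log p)\, T(p) \geq |A|^2 \bigl( 1 + c^2 - 2c'/\alpha - 2cc' \bigr)\log N + O(|A|^2),
\end{equation*}
and the choice $c' = c^2\alpha/10$ makes the coefficient at least $1 + c^2/2$.

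Confronting the two bounds produces $|A|^2 c^2 \log N \ll |A|\, N^{\alpha - c'} + |A|^2$, which for $N$ large forces $|A| \ll N^{\alpha-c'}$, as required. The only delicate point is the three-way bookkeeping among $\mathcal{P}_{\unif}$, $\mathcal{P} \setminus \mathcal{P}_{\unif}$, and the exceptional complement of $\mathcal{P}$: one must ensure that the constant $c^2$ gain from the non-uniform primes dominates both the truncation loss of order $c'/\alpha$ and the exceptional-set loss of order $c'$, which is what pins down the quantitative relation $c' \sim c^2 \alpha$.
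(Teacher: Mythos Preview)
Your proposal is correct and matches the paper's argument essentially line by line: the paper also truncates at $Q=N^{\alpha-c'}$, bounds the second-moment sum $\sum_{p\le Q}(\log p)\sum_r N_p(r)^2$ from above by $|A|^2\log N+O(|A|Q)$ and from below via Cauchy--Schwarz on $\mathcal{P}$ plus the extra $c|A|^2/p$ on $\mathcal{P}\setminus\mathcal{P}_{\unif}$, arriving at the same numerical comparison with $c'=c^2\alpha/10$. The only cosmetic difference is that the paper removes the diagonal $a=a'$ at the outset, whereas you carry it as the $O(|A|Q)$ term on the upper-bound side.
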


\begin{proof}
Let $Q=N^{\alpha-c'}$.
As in the proof of the ordinary larger sieve, consider the quantity
\[ I=\sum_{p\leq Q}\sum_{\substack{a,b\in A\\ a\neq b}}1_{p\mid a-b}\log p \]
 By first summing over $p$ and then summing over $a,b$, we have the upper bound $I\leq |A|^2\log N$. On the other hand, by first summing over $a,b$ and then summing over $p$, we have the lower bound
\[ I\geq\sum_{\substack{p\in\mathcal{P}\\ p\leq Q}}\sum_{r\pmod p}|\{a\in A:a\equiv r\pmod p\}|^2\log p-O(|A|Q). \]
For $p\in\mathcal{P}$, by Cauchy-Schwarz we have
\[ \sum_{r\pmod p}|\{a\in A:a\equiv r\pmod p\}|^2\geq\frac{|A|^2}{|A\pmod p|}\geq\frac{|A|^2}{\alpha p}. \]
For $p\in\mathcal{P}\setminus\mathcal{P}_{\unif}$, by definition we have
\[ \sum_{r\pmod p}|\{a\in A:a\equiv r\pmod p\}|^2\geq\left(\frac{1}{\alpha}+c\right)\frac{|A|^2}{p}. \]
Hence,
\[ I\geq |A|^2\left(\frac{1}{\alpha}\sum_{\substack{p\in\mathcal{P}\\ p\leq Q}}\frac{\log p}{p}+c\sum_{\substack{p\in\mathcal{P}\setminus\mathcal{P}_{\unif}\\ p\leq Q}}\frac{\log p}{p}\right)-O(|A|Q)\geq |A|^2\left[\frac{1}{\alpha}(\alpha-3c')+\frac{c^2}{2}\right]\log N-O(|A|Q). \]
By our choice of $c'$ we have
\[ I>\left(1+\frac{c^2}{6}\right)|A|^2\log N-O(|A|Q). \]
Combining this with the upper bound for $I$, we conclude that $|A|\ll Q$ as desired.
\end{proof}

The uniformity condition for $p\in\mathcal{P}_{\unif}$ will be used in the following way.

\begin{lemma}[uniform fiber property]\label{lem:fiber}
Let the notations be as in Lemma \ref{lem:non-uniform-fiber}. If $p\in\mathcal{P}_{\unif}$, then $|A\pmod p|\geq\alpha (1-c\alpha)p$, and furthermore
\[ |\{a\in A:a\equiv r\pmod p\}|\geq \left(\frac{1}{\alpha}-c^{1/3}\right)\frac{|A|}{p} \]
for all but at most $c^{1/3}p$ residues $r\in A\pmod p$.
\end{lemma}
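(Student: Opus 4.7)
The plan is to work entirely with the fiber counts $n_r=|\{a\in A:a\equiv r\pmod p\}|$ for $r\in\Z/p\Z$, and to extract both conclusions from two pieces of $L^2$ information: the first moment identity $\sum_r n_r=|A|$, the second moment bound $\sum_r n_r^2\leq (1/\alpha+c)|A|^2/p$ which is precisely the definition of $\mathcal{P}_{\unif}$, together with the support constraint that $n_r$ vanishes outside $R:=A\pmod p$, where by hypothesis $|R|\leq\alpha p$.

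For the first assertion, I would apply Cauchy--Schwarz in the form $|A|^2=(\sum_{r\in R}n_r)^2\leq |R|\sum_r n_r^2$, which together with the second moment hypothesis gives $|R|\geq p/(1/\alpha+c)$. Since $(1+c\alpha)(1-c\alpha)\leq 1$, this is at least $\alpha p(1-c\alpha)$, as claimed.

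For the second assertion, I would compute the restricted variance
\[ V=\sum_{r\in R}\left(n_r-\frac{|A|}{\alpha p}\right)^2. \]
Expanding the square produces three terms; plugging in $\sum n_r=|A|$, the hypothesis $\sum n_r^2\leq(1/\alpha+c)|A|^2/p$, and the bound $|R|\leq\alpha p$ in the quadratic term, the coefficients of $|A|^2/p$ coming from $1/\alpha+c$, from $-2/\alpha$, and from $1/\alpha$ collapse, leaving $V\leq c|A|^2/p$. Finally, any $r\in R$ with $n_r<(1/\alpha-c^{1/3})|A|/p$ contributes at least $c^{2/3}(|A|/p)^2$ to $V$, so by Chebyshev the number of such $r$ is at most $c^{1/3}p$, giving the second claim.

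I do not expect any serious obstacle here: both parts are essentially one-line consequences of the second moment information granted by $p\in\mathcal{P}_{\unif}$. The only point requiring care is choosing the "target mean" in the variance to be $|A|/(\alpha p)$ rather than the true mean $|A|/|R|$ over $R$; this is what allows the three $|A|^2/p$ contributions to cancel against each other using only the upper bound $|R|\leq\alpha p$ (we do not yet have a matching lower bound of the same shape, since the first part of the lemma only yields $|R|\geq\alpha p(1-c\alpha)$ after a Cauchy--Schwarz that loses the constants needed here).
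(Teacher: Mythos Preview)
Your proposal is correct and follows essentially the same route as the paper's own proof: Cauchy--Schwarz gives $|R|\geq |A|^2/\sum_r n_r^2\geq \alpha p/(1+c\alpha)\geq\alpha(1-c\alpha)p$, and the restricted variance $\sum_{r\in R}(n_r-|A|/(\alpha p))^2$ is expanded exactly as you describe, using $|R|\leq\alpha p$ in the constant term so that the three $1/\alpha$ contributions cancel to leave $c|A|^2/p$, after which Chebyshev yields the exceptional set bound. Your remark about why one centers at $|A|/(\alpha p)$ rather than at the true mean $|A|/|R|$ is precisely the point.
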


\begin{proof}
For $r\in A\pmod p$, let $x_r=|\{a\in A:a\equiv r\pmod p\}|$. Then,
\[ \sum_r x_r=|A|,\ \ \sum_{r}x_r^2\leq\left(\frac{1}{\alpha}+c\right)\frac{|A|^2}{p}. \]
By Cauchy-Schwarz,
\[ |A\pmod p|\geq \frac{|A|^2}{\sum_r x_r^2}\geq \frac{|A|^2}{\left(\frac{1}{\alpha}+c\right)\frac{|A|^2}{p}}\geq \alpha(1-c\alpha)p. \]
Furthermore,
\[ \sum_r \left(x_r-\frac{|A|}{\alpha p}\right)^2\leq\left(\frac{1}{\alpha}+c\right)\frac{|A|^2}{p}-\frac{2|A|^2}{\alpha p}+\frac{|A|^2}{\alpha^2p^2}\cdot |A\pmod p|\leq \frac{c|A|^2}{p}. \]
If $x_r< (1/\alpha-c^{1/3})|A|/p$, then
\[ \left(x_r-\frac{|A|}{\alpha p}\right)^2>\frac{c^{2/3}|A|^2}{p^2}, \]
and thus the number of such $r$'s is at most $c^{1/3}p$.
\end{proof}

\subsection{Some results from additive combinatorics}

Let $A,B\subset \Z/p\Z$. We will be interested in lower bounds for the size of the sumset $A+B$. More generally, we need lower bounds for the number of elements in the sumset that has many representations as the sum of two elements from $A$ and $B$.

\begin{lemma}[Cauchy-Davenport-Chowla]\label{lem:cdc}
Let $p$ be prime and let $A,B\subset\Z/p\Z$ be subsets. Then
\[ |A+B|\geq\min(p,|A|+|B|-1). \]
\end{lemma}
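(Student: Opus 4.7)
The plan is to prove this classical Cauchy--Davenport--Chowla statement via Davenport's $e$-transform. First I would dispose of the case $|A|+|B|>p$: for any target $c\in\Z/p\Z$ the sets $A$ and $c-B$ both lie in $\Z/p\Z$ and have total size exceeding $p$, so they must intersect, giving $c=a+b\in A+B$. Hence $A+B=\Z/p\Z$ and the bound $\min(p,|A|+|B|-1)=p$ is trivially attained.

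Assume from now on that $|A|+|B|\leq p$ and both sets are nonempty. I proceed by induction on $|B|$. The base case $|B|=1$ is immediate since then $|A+B|=|A|=|A|+|B|-1$. For the inductive step, fix $|B|\geq 2$ and, for each $e\in A-B$, form the Davenport pair
\[ A_e=A\cup(B+e), \qquad B_e=B\cap(A-e). \]
An inclusion--exclusion computation gives $|A_e|+|B_e|=|A|+|B|$, and one verifies directly that $A_e+B_e\subseteq A+B$: any element of the form $(b+e)+b_e$ with $b\in B$ and $b_e\in B_e$ equals $b+(e+b_e)\in B+A$ because $e+b_e\in A$ by definition of $B_e$, while an element $a+b_e$ with $a\in A$ lies in $A+B$ trivially. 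Also $B_e\neq\emptyset$ because $e\in A-B$.

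The heart of the argument is to produce some $e\in A-B$ with $|B_e|<|B|$, for then the inductive hypothesis applied to $(A_e,B_e)$ yields
\[ |A+B|\geq |A_e+B_e|\geq |A_e|+|B_e|-1=|A|+|B|-1, \]
with the minimum in the inductive bound being $|A|+|B|-1$ rather than $p$ since $|A_e|+|B_e|=|A|+|B|\leq p$. The sole delicate point is verifying that such an $e$ exists, and this is the only step that uses primality. If no such $e$ existed, then for every $a\in A$ and $b\in B$ the choice $e=a-b$ would satisfy $B+e\subseteq A$, so $A$ would be closed under translation by every element of $B-B$. Since $|B|\geq 2$ produces a nonzero element of $B-B$ and $\Z/p\Z$ has no nontrivial proper subgroups, this would force $A=\Z/p\Z$, contradicting $|A|+|B|\leq p$. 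Everything else in the argument is direct bookkeeping.
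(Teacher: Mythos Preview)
Your proof is correct and complete: the pigeonhole argument for $|A|+|B|>p$, the $e$-transform with the inclusion--exclusion size identity $|A_e|+|B_e|=|A|+|B|$, the containment $A_e+B_e\subseteq A+B$, and the stabilizer argument forcing $A=\Z/p\Z$ when no shrinking transform exists are all handled cleanly.

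The paper does not give its own proof of this lemma; it simply cites Theorem~5.4 in Tao--Vu. So you have supplied a self-contained argument where the paper defers to the literature. Your approach via the Davenport transform is exactly the classical one and is essentially what lies behind the cited reference, so there is no substantive methodological difference to compare.
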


\begin{proof}
See Theorem 5.4 in \cite{TV06}.
\end{proof}

\begin{lemma}[robust Cauchy-Davenport-Chowla]\label{lem:gr}
Let $p$ be prime and let $A,B\subset\Z/p\Z$ be subsets. Let $K>0$ be real. Let $S\subset A+B$ be the set of elements in $A+B$ with at least $K$ representations $a+b$ ($a\in A,b\in B$); in other words,
\[ S=\{s\in A+B:|A\cap (s-B)|\geq K\}. \]
If $|A|,|B|\geq\sqrt{Kp}$, then
\[ |S|\geq\min(p,|A|+|B|-1)-3\sqrt{Kp}. \]
\end{lemma}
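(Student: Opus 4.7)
The plan is to reduce the statement to Lemma~\ref{lem:cdc} by finding subsets $A'\subseteq A$ and $B'\subseteq B$ with $A'+B'\subseteq S$ and $|A\setminus A'|+|B\setminus B'|\leq 3\sqrt{Kp}$; granted such $A',B'$, Lemma~\ref{lem:cdc} applied to $A',B'$ yields
\[ |S|\geq|A'+B'|\geq\min(p,|A'|+|B'|-1)\geq\min(p,|A|+|B|-1)-3\sqrt{Kp}, \]
as required.

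To construct $A'$ and $B'$, put $X=(A+B)\setminus S$ and consider the bipartite ``bad graph'' $G$ on $A\cup B$ whose edges are the pairs $(a,b)\in A\times B$ with $a+b\in X$. Two structural facts about $G$ drive the argument: first, since each $x\in X$ satisfies $r(x)<K$, the total number of edges is $|E(G)|=\sum_{x\in X}r(x)<K|X|\leq Kp$; second, for each $x\in X$ the edges with $a+b=x$ form a matching of size $r(x)<K$, so $G$ decomposes into ``matching-fibres'' of size at most $K-1$ indexed by $X$. The construction proceeds in two stages. In the first, by Markov's inequality and the edge count, the set of vertices in $A$ of bad-degree at least $\sqrt{Kp}$ has size at most $\sqrt{Kp}$, and similarly for $B$; deleting them gives $A_1\subseteq A$, $B_1\subseteq B$ whose induced bad subgraph has maximum degree at most $\sqrt{Kp}$. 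In the second stage I aim to delete a further $\sqrt{Kp}$ vertices and destroy every remaining bad edge, by greedily selecting, in each residual fibre, a vertex whose removal simultaneously kills several fibres; the hypothesis $|A|,|B|\geq\sqrt{Kp}$ is what guarantees such high-overlap vertices exist.

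The principal obstacle is making the second pruning efficient. Treating the residual graph abstractly, a König-type vertex-cover bound could be as large as $Kp$, exceeding our budget of $\sqrt{Kp}$ by a full factor of $\sqrt{Kp}$. The proof must therefore genuinely exploit the fibre decomposition, together with Pollard's inequality
\[ \sum_{i=1}^{K}|\{s:r(s)\geq i\}|\geq K\min(p,|A|+|B|-K), \]
which controls the aggregate multiplicity deficit and limits how many ``large'' fibres can survive the first pruning. Combining this with the greedy selection, and using $|A|,|B|\geq\sqrt{Kp}$ to ensure the Pollard bound is nontrivial and the greedy step has room to manoeuvre, should bring the second-stage deletion count below $\sqrt{Kp}$ and complete the reduction.
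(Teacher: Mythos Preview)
The paper does not prove this lemma; it simply cites Corollary~6.2 of \cite{GR05}. The argument there is a direct application of Pollard's inequality, and your proposal does not reach that point.

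Your Stage~1 is fine, but Stage~2 is not a proof: phrases like ``should bring the second-stage deletion count below $\sqrt{Kp}$'' and ``the greedy step has room to manoeuvre'' are hopes, not arguments. In fact the vertex-cover reduction you are attempting is \emph{strictly stronger} than the lemma --- you are asking for large $A'\subseteq A$, $B'\subseteq B$ with $A'+B'\subseteq S$, which is a structural statement, whereas the lemma only asserts a cardinality bound on $|S|$. There is no reason to expect this stronger claim with constant~$3$, and nothing in your sketch establishes it. Note that a bipartite graph with at most $Kp$ edges decomposing into matchings of size below $K$ can easily have minimum vertex cover far exceeding $\sqrt{Kp}$ (think of a union of many near-perfect matchings), so the graph-theoretic data alone cannot close Stage~2; any successful argument would have to import the additive structure again, which rather defeats the purpose of the reduction.

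The actual proof bypasses all of this. Apply Pollard's inequality with a parameter $t>K$ (not $t=K$ as you wrote): one has
\[
\sum_{s}\min(r(s),t)\;\ge\; t\,\min\bigl(p,\,|A|+|B|-t\bigr),
\]
while trivially $\sum_s\min(r(s),t)\le t|S|+K(p-|S|)$ since elements outside $S$ contribute less than $K$ each. Rearranging gives
\[
|S|\;\ge\;\frac{t\min(p,|A|+|B|-t)-Kp}{t-K}.
\]
Choosing $t=\lfloor\sqrt{Kp}\rfloor$ (which satisfies $K\le t\le\min(|A|,|B|)$ by the hypothesis $|A|,|B|\ge\sqrt{Kp}$) and simplifying yields $|S|\ge |A|+|B|-2\sqrt{Kp}-O(1)$, stronger than claimed. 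The point is that Pollard already delivers the lower bound on $|S|$ directly; there is no need to pass through a graph and then try to recover control.
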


\begin{proof}
See Corollary 6.2 in \cite{GR05}.
\end{proof}

We also need the following structural theory of sets with (very) small doubling.

\begin{lemma}\label{lem:gryn}
Let $p$ be prime and let $A,B\subset\Z/p\Z$ be subsets. Let $c>0$ be sufficiently small. If $|A|,|B|\geq cp+3$, and moreover
\[ |A+B|\leq\min(|A|+|B|-1+cp,(1-c)p-3), \]
then $A$ and $B$ are contained in arithmetic progressions of length at most $|A|+cp$ and $|B|+cp$, respectively.
\end{lemma}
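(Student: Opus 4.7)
The plan is to recognize Lemma \ref{lem:gryn} as a direct consequence of the Freiman $3k-4$ theorem for sumsets in $\Z/p\Z$ (in the form established by Hamidoune--R\o dseth, with antecedents in Freiman and Vosper). That theorem asserts: for nonempty $A,B\subset\Z/p\Z$ with $|A+B|=|A|+|B|-1+r$, if the two non-degeneracy conditions $r\leq\min(|A|,|B|)-3$ and $|A+B|\leq p-r-1$ both hold, then there exist arithmetic progressions $P_A,P_B\subset\Z/p\Z$ sharing a common difference with $A\subset P_A$, $B\subset P_B$, and $|P_A|\leq |A|+r$, $|P_B|\leq |B|+r$.

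To deduce the lemma, I set $r:=|A+B|-|A|-|B|+1$. The hypothesis $|A+B|\leq |A|+|B|-1+cp$ gives $r\leq cp$, and combined with $|A|,|B|\geq cp+3$ this yields $r+3\leq\min(|A|,|B|)$, which is the first condition. The hypothesis $|A+B|\leq (1-c)p-3$ rearranges as $|A+B|\leq p-cp-3\leq p-r-3\leq p-r-1$, which is the second condition. The theorem then applies and produces APs of lengths at most $|A|+r\leq |A|+cp$ and $|B|+r\leq |B|+cp$, which is exactly the claimed bound (the common-difference information provided by the theorem is stronger than what the lemma asks for and can simply be discarded).

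If I instead had to prove the underlying $\Z/p\Z$ theorem from scratch, the standard route is Freiman's rectification: since $|A+B|<p$, a translate can be chosen so that $A+B$ avoids an interval in $\Z/p\Z$ of length at least $cp+3$; this ``gap'' allows one to cut the circle and lift $A,B$ injectively to $\tilde A,\tilde B\subset\Z$ via a suitable dilation, in such a way that $|\tilde A+\tilde B|=|A+B|$ with no wrap-around. One then applies the classical Freiman $3k-4$ theorem in $\Z$ to the pair $\tilde A,\tilde B$ with this same $r$ and projects the resulting arithmetic progressions back to $\Z/p\Z$.

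The principal technical obstacle in that bottom-up approach is the rectification step itself: exhibiting the correct dilation so that the lift to $\Z$ is simultaneously injective on $A\cup B$ and preserves the sumset structure. This is precisely where the size of the gap in $A+B$ (ensured by the $(1-c)p-3$ bound, together with $|A|,|B|\geq cp+3$) gets used. Once rectification is in place, the Plünnecke-type and $\Z$-sumset arguments are routine. For the purposes of Lemma \ref{lem:gryn}, however, the $\Z/p\Z$ $3k-4$ theorem can be invoked as a black box, so the proof reduces to the hypothesis-checking done above.
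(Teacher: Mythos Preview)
Your proposal is correct and matches the paper's approach: the paper's entire proof is the single line ``See Theorem 21.8 in \cite{Gry13}'', i.e., it quotes exactly the $\Z/p\Z$ Freiman $3k-4$ theorem you invoke (as recorded in Grynkiewicz's book) without even spelling out the hypothesis verification. Your explicit parameter check with $r:=|A+B|-|A|-|B|+1$ is accurate (note $r\geq 0$ is automatic here from Cauchy--Davenport since $|A+B|<p$), and your rectification sketch is the standard proof of that theorem, so there is nothing to add.
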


\begin{proof}
See Theorem 21.8 in \cite{Gry13}.
\end{proof}

\section{Improved larger sieve assuming additive structures}\label{sec:small-doubling}

\begin{proposition}[larger sieve with small doubling]\label{prop:small-doubling}
Let $A\subset [N]$ be a subset and $\alpha\in (0,1/2]$ be real. Assume that $|A-A|\leq |A|^{3/2-c}$ for some $c>0$. Let $c'>0$ be sufficiently small depending on $c$. Let $\mathcal{P}$ be a subset of the primes up to $N^{\alpha}$ such that
\[ \sum_{\substack{p\notin\mathcal{P}\\ p\leq N^{\alpha}}}\frac{\log p}{p}<c'\log N. \]
If $|A\pmod p|\leq\alpha p$ for each $p\in\mathcal{P}$, then $|A|\ll N^{\alpha-c'}$. Indeed, we may take $c'=10^{-50}(c\alpha)^{25}$.
\end{proposition}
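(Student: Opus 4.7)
The plan is to argue by contradiction: suppose $|A| \geq C N^{\alpha - c'}$ for a large constant $C$, and derive a contradiction from this together with the small doubling $|A-A| \leq |A|^{3/2-c}$ and the sieve hypothesis $|A \pmod p| \leq \alpha p$ on $\mathcal{P}$.

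\textbf{Step 1: reduce to a uniform regime.} First apply Lemma~\ref{lem:non-uniform-size} with constants tuned to $c$ and $c'$. Unless the $\log p/p$-average of $|A \pmod p|/p$ is extremely close to $\alpha$ on almost all of $\mathcal{P}$, the lemma already delivers $|A| \ll N^{\alpha-c'}$, contradicting the assumed lower bound. Since moreover $|A \pmod p| \leq \alpha p$ on $\mathcal{P}$, we are forced into $|A \pmod p| \approx \alpha p$ for most $p \in \mathcal{P}$. Then apply Lemma~\ref{lem:non-uniform-fiber}: again, unless $A$ is nearly uniformly distributed in its residue classes modulo $p$ for almost all $p$, we already conclude $|A| \ll N^{\alpha-c'}$. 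The net effect is that we may restrict attention to a subset $\mathcal{P}' \subset \mathcal{P} \cap \mathcal{P}_{\unif}$ of overwhelming $\log p/p$-density for which Lemma~\ref{lem:fiber} provides precise fiber information: $|A \pmod p| \geq \alpha(1 - O(c'\alpha))p$, and all but a tiny fraction of residues $r \in A \pmod p$ satisfy $|A_r| \geq (1 - O(c'^{1/3}))|A|/(\alpha p)$.

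\textbf{Step 2: extract AP structure modulo many primes.} Now the small doubling hypothesis enters. For every $p$ we have $|(A-A) \pmod p| \leq \min(|A-A|, p) \leq |A|^{3/2-c}$, while Cauchy--Davenport--Chowla (Lemma~\ref{lem:cdc}) plus Step~1 yields $|(A-A) \pmod p| \geq 2|A \pmod p| - 1 \geq (2\alpha - O(c'))p$. For primes $p$ in a suitable window for which these two bounds are close, Lemma~\ref{lem:gryn} then forces $A \pmod p$ to be contained in an arithmetic progression in $\Z/p\Z$ of length at most $|A \pmod p| + O(cp) \leq \alpha p + O(cp)$. When direct application of Lemma~\ref{lem:gryn} fails because $|A-A|$ exceeds the required threshold for a given $p$, one first passes to a popular subset of $A-A$ via the robust Cauchy--Davenport--Chowla bound (Lemma~\ref{lem:gr}), replacing $A$ by a dense subset $A' \subset A$ of comparable size to which the Lemma~\ref{lem:gryn} hypothesis can be enforced.

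\textbf{Step 3: bootstrap via the larger sieve.} Having established AP structure of $A \pmod p$ for a positive $\log p/p$-density of primes, one uses the fact that a difference of APs of length $\ell$ is an AP of length $2\ell - 1$ to sharpen the sieve estimates on $A-A$, and more importantly on translates/dilates $\ell^{-1} \cdot A$ that ``align'' the AP into a short initial interval modulo many primes simultaneously. Feeding this refined interval information back into Lemma~\ref{lem:gallagher2}, we gain a constant polynomial-in-$c\alpha$ improvement over the trivial exponent $\alpha$. A bounded number of such iterations, each losing a polynomial factor in $c\alpha$, ultimately contradicts either the initial lower bound $|A| \geq N^{\alpha-c'}$ or the small doubling bound; the quantitative $c' = 10^{-50}(c\alpha)^{25}$ tracks these cumulative losses.

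\textbf{Main obstacle.} The central difficulty is the mismatch between the global hypothesis $|A-A| \leq |A|^{3/2-c}$ in $\Z$ and the local requirement of Lemma~\ref{lem:gryn}, which demands $|(A-A) \pmod p|$ to be within $cp$ of $2|A \pmod p|-1$ \emph{for an individual prime} $p$. For primes $p \leq N^\alpha$ with $|A| \approx N^\alpha$, the global bound is typically far larger than $2\alpha p$, so bridging this gap --- by isolating a supply of primes where both bounds are simultaneously sharp, or by passing to popular subsets of $A$ and $A-A$ to enforce the hypotheses --- is where the bulk of the technical work in Section~\ref{sec:small-doubling} must be carried out.
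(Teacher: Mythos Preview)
Your proposal follows a genuinely different route from the paper, and the route has a real gap that you yourself flag but do not close.

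The paper's argument does \emph{not} try to extract arithmetic-progression structure of $A\pmod p$ from the small-doubling hypothesis, and does not invoke Lemma~\ref{lem:gryn} at all. Instead it runs a ``differenced'' version of the larger sieve. One considers
\[
J=\sum_{a,b\in A}\sum_{\substack{p\in\mathcal{P},\ p\leq Q\\ \nu_p(a-b)\geq\epsilon p}}\frac{\log p}{\nu_p(a-b)},
\qquad \nu_p(h)=|A_p\cap(A_p+h)|,\ \ \nu(h)=|A\cap(A+h)|,
\]
with $R=|A|^{(1+c)/2}$ and $Q=\min(|A|^{1/2+c/4},N^{\alpha})$. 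For pairs $(a,b)$ with $\nu(a-b)>R$, Gallagher's larger sieve applied to the set $A\cap(A+a-b)$ (which sits in $A_p\cap(A_p+a-b)$ modulo $p$) gives the inner sum $\leq\log N+1$. The small-doubling hypothesis enters \emph{only} here: the number of remaining pairs is at most $R\,|A-A|\leq |A|^{(1+c)/2}\cdot|A|^{3/2-c}=o(|A|^2)$, so $J\leq(1+o(1))|A|^2\log N$. For the lower bound, one reduces to $\mathcal{P}_{\unif}$ via Lemma~\ref{lem:non-uniform-fiber} as you suggest, and then uses the robust Cauchy--Davenport--Chowla bound (Lemma~\ref{lem:gr}) to show that $|\{h:\nu_p(h)\geq\epsilon p\}|\geq(2\alpha-O(\epsilon^{1/2}))p$; combined with the fiber uniformity this gives $J\gtrsim \frac{2}{\alpha}|A|^2\log Q$. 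Comparing the two bounds forces $\log Q\lesssim\frac{\alpha}{2}\log N$, which by the choice of $Q$ yields $|A|\ll N^{\alpha(1-c^2/16)}$.

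Your Step~2 is where things break. The obstacle you name in your final paragraph is fatal for the approach you outline: for $p\leq N^{\alpha}$ one has $|A-A|\leq|A|^{3/2-c}\approx N^{3\alpha(1-c)/2}$, which for $\alpha\leq 1/2$ is typically much larger than $p$, so the global doubling bound gives no information whatsoever about $|(A-A)\pmod p|$. Your suggested remedies --- restricting to primes where the bounds meet, or passing to popular subsets to force Lemma~\ref{lem:gryn}'s hypothesis --- are not workable as stated: there is no window of primes below $N^{\alpha}$ where $|A|^{3/2-c}$ drops to $O(p)$, and passing to a popular subset of $A-A$ does not by itself reduce $|(A-A)\pmod p|$ below $(|A\pmod p|+|A\pmod p|-1)$. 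The paper sidesteps this entirely: it never needs $(A-A)\pmod p$ to be small, because the small doubling is used only to count pairs globally, while the local input comes from Lemma~\ref{lem:gr} applied to $A_p$ alone.
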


\begin{proof}
Write $A_p=A\pmod p$. Let $\epsilon>0$ be a small parameter depending on $c$ to be chosen later (Indeed, we may take $\epsilon=(c\alpha/100)^2$ and then $c'=\epsilon^{12}\alpha/10$). Let $R=|A|^{(1+c)/2}$ and $Q=\min(|A|^{1/2+c/4},N^{\alpha})$. For any $h\in\Z$ and $p\in\mathcal{P}$, let $\nu_p(h)=|A_p\cap (A_p+h)|$, the number of ways to write $h\pmod p$ as the difference of two elements of $A_p$. Let $\nu(h)=|A\cap (A+h)|$, the number of ways to write $h$ as the difference of two elements of $A$. Consider the quantity
\[ J=\sum_{a,b\in A}\sum_{\substack{p\in\mathcal{P}\\ p\leq Q\\ \nu_p(a-b)\geq\epsilon p}}\frac{\log p}{\nu_p(a-b)}. \]

We first obtain an upper bound for $J$. For those pairs $a,b\in A$ with $\nu(a-b)>R$, we apply the larger sieve (Lemma \ref{lem:gallagher}) to the set $A\cap (A+a-b)$. Since $|A\cap (A+a-b)|>R$, and the residues $A\cap (A+a-b)\pmod p$ lie in the intersection $A_p\cap (A_p+a-b)$, which has size $\nu_p(a-b)$, we conclude that
\[ \sum_{\substack{p\in\mathcal{P}\\ p\leq Q}}\frac{\log p}{\nu_p(a-b)}\leq\log N+1. \]
Hence the contribution to $J$ from these pairs is at most $|A|^2(\log N+1)$. 

For those pairs $a,b\in A$ with $\nu(a-b)\leq R$, the inner sum over $p$ can be bounded trivially by $O(\log N)$. The number of such pairs is
\[ \sum_{\substack{h\in A-A\\ \nu(h)\leq R}}\nu(h)\leq R|A-A|=o(|A|^2)  \]
by our choice of $R$. Hence
\[ J\leq (1+o(1))|A|^2\log N. \]

We now seek for a lower bound for $J$. By Lemma \ref{lem:non-uniform-fiber}, either $|A|\ll N^{\alpha-c'}$ and we are done, or else $A$ has uniform fiber over almost all primes $p\in\mathcal{P}$. More precisely, let
\[ \mathcal{P}_{\unif}=\left\{p\in\mathcal{P}:\sum_{r\pmod p}|\{a\in A:a\equiv r\pmod p\}|^2\leq\left(\frac{1}{\alpha}+\epsilon^6\right)\frac{|A|^2}{p}\right\}. \]
Then,
\[ \sum_{\substack{p\notin\mathcal{P}_{\unif}\\ p\leq N^{\alpha}}}\frac{\log p}{p}\leq \epsilon^6\log N. \]

Fix a prime $p\in\mathcal{P}_{\unif}$. By Lemma \ref{lem:fiber}, we have $|A_p|\geq\alpha (1-\epsilon^6\alpha)p$, and there exists a subset $A_p'\subset A_p$ with $|A_p\setminus A_p'|\leq \epsilon^2 p$, such that
\[ |\{a\in A:a\equiv r\pmod p\}|\geq\left(\frac{1}{\alpha}-\epsilon^2\right)\frac{|A|}{p} \]
for each $r\in A_p'$. For $r\in A_p$ write $x_r=|\{a\in A:a\equiv r\pmod p\}|$. We have
\[ \sum_{\substack{a,b\in A\\ \nu_p(a-b)\geq\epsilon p}}\frac{1}{\nu_p(a-b)}\geq \sum_{\substack{r,s\in A_p'\\ \nu_p(r-s)\geq\epsilon p}}\frac{x_rx_s}{\nu_p(r-s)}\geq \frac{1}{\alpha^2}\left(1-2\epsilon^2\right)\frac{|A|^2}{p^2}\sum_{\substack{r,s\in A_p'\\ \nu_p(r-s)\geq\epsilon p}}\frac{1}{\nu_p(r-s)}.  \]
For any $h\in \Z/p\Z$, the number of ways to write $h$ as the difference of two elements in $A_p'$ is at least $\nu_p(h)-2\epsilon^2 p$. Hence
\[ \sum_{\substack{r,s\in A_p'\\ \nu_p(r-s)\geq\epsilon p}}\frac{1}{\nu_p(r-s)}\geq \sum_{\substack{h\in\Z/p\Z\\ \nu_p(h)\geq\epsilon p}}\frac{\nu_p(h)-2\epsilon^2 p}{\nu_p(h)}\geq (1-2\epsilon)\cdot |\{h\in\Z/p\Z:\nu_p(h)\geq\epsilon p\}|. \]
By Lemma \ref{lem:gr}, the number of differences $h\in\Z/p\Z$ with $\nu_p(h)\geq\epsilon p$ is at least
\[ \min(p,2|A_p|-1)-3\epsilon^{1/2}p>(2\alpha-5\epsilon^{1/2})p.  \]
Consequently,
\[ \sum_{\substack{a,b\in A\\ \nu_p(a-b)\geq\epsilon p}}\frac{1}{\nu_p(a-b)}\geq\frac{1}{\alpha^2}(1-2\epsilon^2)\frac{|A|^2}{p^2}(1-2\epsilon)(2\alpha-5\epsilon^{1/2})p\geq\frac{2}{\alpha^2}(\alpha-8\epsilon^{1/2})\frac{|A|^2}{p}.  \]
Summing over all $p\in\mathcal{P}_{\unif}$ and $p\leq Q$ we get
\[ J\geq \frac{2}{\alpha^2}(\alpha-8\epsilon^{1/2})|A|^2\sum_{\substack{p\in\mathcal{P}_{\unif}\\ p\leq Q}}\frac{\log p}{p}\geq \frac{2}{\alpha^2}(\alpha-8\epsilon^{1/2})|A|^2(\log Q-\epsilon^6\log N-O(1)). \]
Combining this with the upper bound for $J$ previously obtained, we deduce that
\[ \frac{\log Q}{\log N}\leq \frac{\alpha^2}{2(\alpha-8\epsilon^{1/2})}+\epsilon^6+o(1) \leq
\frac{\alpha}{2}+10\epsilon^{1/2}\leq \alpha\left(\frac{1}{2}+\frac{c}{8}\right) \]
by our choice of $\epsilon$. This means that $Q\leq N^{\alpha(1/2+c/8)}$. By our choice of $Q$, this implies that $|A|\ll N^{\alpha(1-c^2/16)}$, as desired.
\end{proof}

\section{A variant of the large sieve}\label{sec:large}

In this section we prove Theorem \ref{thm:large}. We start by proving its dual form. Fix parameters $P$ and $N$. Let $Y$ be the space of functions on the set $\{a/p:p\leq P,1\leq a\leq p-1\}$, and $X$ be the space of functions on $[N]$. We will use $g$ to denote a typical function in $Y$ and write $g(a/p)=g_p(a)$. Let $L:Y\rightarrow X$ be the linear operator defined by
\[ L(g)(n)=\sum_{p\leq P}\sum_{a=1}^{p-1}g_p(a)e\left(\frac{an}{p}\right), \]
for $g\in Y$ and $1\leq n\leq N$. Equip $X$ with the $L^{2k}$-norm and equip $Y$ with a norm $\|\cdot\|_Y$ that is the sum of two norms:
\[ \|g\|_Y=\|g\|_{Y_1}+\|g\|_2, \]
where $\|g\|_2$ is the usual $L^2$-norm
\[ \|g\|_2^2=\sum_{p\leq P}\sum_{a=1}^{p-1}|g_p(a)|^2, \]
and the norm $\|\cdot\|_{Y_1}$ is defined by
\[ \|g\|_{Y_1}^{2k}=\sum_{p\leq P}\left(\sum_{a=1}^{p-1}|g_p(a)|^{2k/(2k-1)}\right)^{2k-1}=\sum_{p\leq P} \|g_p\|_{2k/(2k-1)}^{2k}. \]

\begin{proposition}[Large sieve variant, dual form]\label{prop:large-dual}
For each prime $p\leq P$, let $g_p:(\Z/p\Z)^*\rightarrow\C$ be an arbitrary function. For any positive integer $k$ we have
\[ \sum_{n\leq N}\left|\sum_{p\leq P}\sum_{a=1}^{p-1}g_p(a)e\left(\frac{an}{p}\right)\right|^{2k}\ll (N+P^{2k})\left(\sum_{p\leq P}\|g_p\|_{2k/(2k-1)}^{2k}+\|g\|_2^{2k}\right). \]
In other words, we have $\|L\|_{\text{op}}\ll N^{1/2k}+P$.
\end{proposition}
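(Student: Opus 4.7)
The plan is to expand $F := \sum_p F_p$ (where $F_p(n) := \sum_a g_p(a) e(an/p)$) to the $k$-th power, apply the standard dual large sieve, identify the resulting $\ell^2$-mass of coefficients with an average of $|F|^{2k}$ over a very long period, and then control that average by independence across primes. Writing
\[ F(n)^k = \sum_\alpha d_\alpha e(n\alpha), \qquad d_\alpha := \sum_{\vec p, \vec a : \sum_i a_i/p_i \equiv \alpha \pmod{1}} \prod_i g_{p_i}(a_i), \]
each frequency $\alpha$ has denominator dividing $p_1 \cdots p_k \leq P^k$, so distinct $\alpha$'s are $P^{-2k}$-separated in $\R/\Z$. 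The dual form of Theorem \ref{thm:large-old} applied to $F^k$ then gives
\[ \sum_{n \leq N} |F(n)|^{2k} \ll (N + P^{2k}) \sum_\alpha |d_\alpha|^2. \]

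Next I would expand the square and insert the orthogonality identity for the $\pmod{1}$ condition prime by prime: for each $p \leq P$ the constraint factors as an inverse Fourier sum in a variable $t_p \pmod{p}$, and consolidating the $(t_p)_p$ via the Chinese remainder theorem into a single variable $n \in \Z/P_0\Z$, where $P_0 := \prod_{p \leq P} p$, yields the clean identity
\[ \sum_\alpha |d_\alpha|^2 = \frac{1}{P_0} \sum_{n \in \Z/P_0\Z} |F(n)|^{2k} = \mathbf{E}|F|^{2k}, \]
with $\mathbf{E}$ the expectation under $n$ uniform on $\Z/P_0\Z$. Under this measure, $F_p(n)$ depends only on $n \bmod p$, so by CRT the family $(F_p)_{p \leq P}$ is jointly independent, and each $F_p$ is mean zero since $a \neq 0$ forces $\sum_{n \bmod p} F_p(n) = 0$.

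Rosenthal's moment inequality for sums of independent mean-zero random variables then yields
\[ \mathbf{E}\Big|\sum_p F_p\Big|^{2k} \ll_k \sum_p \mathbf{E}|F_p|^{2k} + \Big(\sum_p \mathbf{E}|F_p|^2\Big)^k. \]
Parseval on $\Z/p\Z$ gives $\mathbf{E}|F_p|^2 = \|g_p\|_2^2$, while $\mathbf{E}|F_p|^{2k} = \|g_p^{*k}\|_{L^2(\Z/p\Z)}^2 \leq \|g_p\|_{2k/(2k-1)}^{2k}$, the first equality being Parseval applied to $F_p^k$ (whose Fourier coefficients on $\Z/p\Z$ are the $k$-fold convolution $g_p^{*k}$) and the inequality being Young's convolution inequality with exponents $1/(2k/(2k-1)) \cdot k = (2k-1)/2$. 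Assembling gives $\sum_\alpha |d_\alpha|^2 \ll_k \|g\|_{Y_1}^{2k} + \|g\|_2^{2k}$, and plugging this back into the large sieve bound proves the proposition.

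The main analytic input is Rosenthal's inequality, which captures the two competing scales simultaneously: the ``large deviations'' term $\sum_p \mathbf{E}|F_p|^{2k}$ produces the $\|g\|_{Y_1}^{2k}$ contribution, while the ``Gaussian'' variance term $(\sum_p \mathbf{E}|F_p|^2)^k$ produces the $\|g\|_2^{2k}$ contribution; either term alone would be too weak. Should one wish to avoid citing Rosenthal, one can instead expand $\mathbf{E}|\sum_p F_p|^{2k}$ directly into a sum over tuples $(\vec p,\vec q)$, classify by the multiplicity pattern of primes appearing in $\vec p$ and $\vec q$, and bound each pattern's contribution combinatorially by H\"older and Young's inequality on the relevant single-prime factor, recovering the same two regimes.
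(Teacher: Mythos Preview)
Your proof is correct and shares its opening moves with the paper: both expand $F^k$, observe that the frequencies appearing are $P^{-2k}$-separated, and invoke the dual large sieve to reduce to bounding $\sum_\alpha |d_\alpha|^2$. From that point on the two arguments diverge.

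The paper proceeds combinatorially: it expands $\sum_\alpha |d_\alpha|^2$ as a sum over $2k$-tuples $(p_1,a_1,\ldots,p_{2k},a_{2k})$ subject to $\sum_{i\leq k}a_i/p_i-\sum_{i>k}a_i/p_i\in\Z$, observes that this condition factors prime by prime, groups tuples by the multiplicity pattern $(n_1,\ldots,n_d)$ of the primes, bounds each single-prime factor by Young's inequality as $\|g_p\|_{n_i/(n_i-1)}^{n_i}$, and then uses H\"older twice to interpolate every pattern between the two extremes $d=1$ and $d=k$. You instead give a probabilistic reinterpretation: the identity $\sum_\alpha|d_\alpha|^2=\mathbf{E}|F|^{2k}$ over $\Z/P_0\Z$, CRT-independence of the $F_p$'s, and Rosenthal's inequality do all the work in one stroke. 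The single-prime moment bound $\mathbf{E}|F_p|^{2k}\leq\|g_p\|_{2k/(2k-1)}^{2k}$ via Young is the same in both proofs.

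Your route is shorter and more conceptual, at the cost of importing Rosenthal as a black box; the paper's is self-contained and makes explicit what Rosenthal's proof would do internally (indeed, your closing remark that one could ``expand directly into a sum over tuples, classify by multiplicity pattern, and bound each pattern by H\"older and Young'' is precisely the paper's argument). The probabilistic framing also makes transparent why exactly two regimes appear---the large-deviation term and the Gaussian term---which in the paper emerges only after the final interpolation step.
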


\begin{proof}
After expanding out the $2k$th power, we can rewrite the left side as
\[ \sum_{p_1,a_1,\cdots,p_{2k},a_{2k}}g_{p_1}(a_1)\cdots g_{p_k}(a_k)\overline{g_{p_{k+1}}(a_{k+1})\cdots g_{p_{2k}}(a_{2k})}\sum_{n\leq N}e\left(n\left(\frac{a_1}{p_1}+\cdots+\frac{a_k}{p_k}-\frac{a_{k+1}}{p_{k+1}}-\cdots-\frac{a_{2k}}{p_{2k}}\right)\right). \]
For each $x\in\R/\Z$, let $S(x)$ be the collection of all tuples $(p_1,a_1,\cdots,p_k,a_k)$ with
\[ \frac{a_1}{p_1}+\cdots+\frac{a_k}{p_k}\in x+\Z. \]
If we write
\[ s(x)=\sum_{(p_1,a_1,\cdots,p_k,a_k)\in S(x)}g_{p_1}(a_1)\cdots g_{p_k}(a_k), \]
then the left side becomes
\[ \sum_{x,y\in\R/\Z}s(x)\overline{s(y)}\sum_{n\leq N}e(n(x-y))=\sum_{\substack{x\in\R/\Z\\ s(x)\neq 0}}\left|\sum_{n\leq N}s(x)e(nx)\right|^2. \]
The set of points $\{x:s(x)\neq 0\}$ is $\delta$-spaced with $\delta=P^{-2k}$. It thus follows from the traditional large sieve inequality (Theorem \ref{thm:large-old}) that the left side of the desired inequality is bounded by
\[  (N+P^{2k})\sum_x |s(x)|^2. \]
To bound $\sum_x |s(x)|^2$, expand out to get
\[ \sum_x |s(x)|^2\leq \sum |g_{p_1}(a_1)\cdots g_{p_{2k}}(a_{2k})|, \]
where the sum above is over tuples $(p_1,a_1,\cdots,p_{2k},a_{2k})$ satisfying
\[ \frac{a_1}{p_1}+\cdots+\frac{a_k}{p_k}-\frac{a_{k+1}}{p_{k+1}}-\cdots-\frac{a_{2k}}{p_{2k}}\in\Z. \]
Note that for any such tuple, we must have
\[ \sum_{\substack{1\leq i\leq k\\ p_i=p}}a_i-\sum_{\substack{k+1\leq i\leq 2k\\ p_i=p}}a_i\equiv 0\pmod p, \]
for any $p\in\{p_1,\cdots,p_{2k}\}$. Thus for fixed denominators $p_1,\cdots,p_{2k}$, the sum over $a_1,\cdots,a_{2k}$ is bounded by
\[ \prod_{p\in\{p_1,\cdots,p_{2k}\}}\left(\sum_{\substack{a_1,\cdots,a_{n_p}\\ \pm a_1\pm\cdots\pm a_{n_p}\equiv 0\pmod p}}|g_p(a_1)\cdots g_p(a_{n_p})|\right), \]
with an appropriate choice from those $\pm$ signs, where $n_p$ is the number of multiplicities of $p$ in $\{p_1,\cdots,p_{2k}\}$. Note that we must have $n_p\geq 2$. By Young's inequality, the inner sum is bounded by
\[ \sum_{\substack{a_1,\cdots,a_{n_p}\\ \pm a_1\pm\cdots\pm a_{n_p}\equiv 0\pmod p}}|g_p(a_1)\cdots g_p(a_{n_p})|\leq\|g_p\|_{n_p/(n_p-1)}^{n_p}. \]
We conclude that for fixed multiplicities $(n_1,\cdots,n_d)$ with $n_1,\cdots,n_d\geq 2$ and $n_1+\cdots+n_d=2k$, the sum over those $(p_1,\cdots,p_{2k})$ satisfying the given multiplicities is bounded by
\[ \prod_{i=1}^d\left[\sum_p\|g_p\|_{n_i/(n_i-1)}^{n_i}\right]. \]
It remains to show that this is always dominated by one of the two extreme cases: $d=1,n_1=2k$, or $d=k,n_1=\cdots=n_k=2$:
\[ \prod_{i=1}^d\left[\sum_p \|g_p\|_{n_i/(n_i-1)}^{n_i}\right]\leq \sum_p \|g_p\|_{2k/(2k-1)}^{2k}+\|g\|_2^{2k}. \]
To prove this, we may assume that $k\geq 2$. By H\"{o}lder's inequality,
\[ \|g_p\|_{n_i/(n_i-1)}^{n_i}\leq \|g_p\|_{2k/(2k-1)}^{(n_i-2)k/(k-1)}\|g_p\|_2^{(2k-n_i)/(k-1)}.  \]
By a further application of H\"{o}lder's inequality,
\[ \sum_p \|g_p\|_{n_i/(n_i-1)}^{n_i} \leq \left(\sum_p \|g_p\|_{2k/(2k-1)}^{2k}\right)^{\frac{n_i-2}{2k-2}}\left(\sum_p \|g_p\|_2^2\right)^{\frac{2k-n_i}{2k-2}}. \]
It follows that
\[ \prod_{i=1}^d\left[\sum_p \|g_p\|_{n_i/(n_i-1)}^{n_i}\right]\leq \left[\sum_p \|g_p\|_{2k/(2k-1)}^{2k}\right]^{\frac{k-d}{k-1}} \left[\sum_p \|g_p|_2^2\right]^{\frac{k(d-1)}{k-1}}. \]
This completes the proof.
\end{proof}

Now we dualize. The dual $X^*$ can be identified as the space of functions on $[N]$, equipped with the $L^{2k/(2k-1)}$-norm. Finding the dual $Y^*$ is trickier. As a set, $Y^*$ can be identified with $Y$. Let $\|\cdot\|_{Y^*}$ and $\|\cdot\|_{Y_1^*}$ be the dual norm for $\|\cdot\|_Y$ and $\|\cdot\|_{Y_1}$, respectively. It is easy to find the dual norm for $\|\cdot\|_{Y_1}$:
\[ \|h\|_{Y_1^*}^{2k/(2k-1)}=\sum_{p\leq P} \|h_p\|_{2k}^{2k/(2k-1)}. \]
For the dual norm for $\|\cdot\|_Y$, we have the following lower bound.

\begin{lemma}\label{lem:dual-norm}
Let the notations be as above. For $h\in Y^*$ we have
\[ 2\|h\|_{Y^*}\geq P^{-(k-1)/2k}\left(\sum_{p\leq P} \|h_p\|_2^{2k/(2k-1)}\right)^{(2k-1)/2k}. \]
\end{lemma}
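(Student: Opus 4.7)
\medskip
\noindent\textbf{Proof plan.} The natural approach is to exhibit an explicit test function $g \in Y$ with $\|g\|_Y \leq 1$ that pairs well against $h$, using duality: $\|h\|_{Y^*} \geq |\langle h,g\rangle|/\|g\|_Y$ where $\langle h,g\rangle = \sum_{p\leq P}\sum_{a=1}^{p-1} h_p(a)\overline{g_p(a)}$. Write $u_p = \|h_p\|_2$, $q = 2k/(2k-1)$, and $M = \bigl(\sum_p u_p^q\bigr)^{1/q}$, so the right-hand side of the lemma is $P^{-(k-1)/2k}M$. The factor of $2$ suggests that the two pieces $\|g\|_{Y_1}$ and $\|g\|_2$ making up the norm $\|g\|_Y$ must \emph{both} end up being comparable in size when $g$ is well chosen; this dictates the construction.

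Following standard duality intuition, one seeks $g_p$ proportional to $\overline{h_p}$; the right scaling factor is suggested by the $\ell^{2k}$-duality on the ``prime'' index. Concretely, set
\[ g_p(a) = \gamma\, u_p^{1/(2k-1)-1}\, \overline{h_p(a)}, \]
for primes $p$ with $u_p>0$ (and $g_p \equiv 0$ otherwise), where $\gamma>0$ is a scalar to be optimized. A direct computation gives
\[ \langle h,g\rangle = \gamma \sum_{p\leq P} u_p^{\,q} = \gamma M^{q}, \qquad \|g_p\|_2^{\,2} = \gamma^2\, u_p^{\,2/(2k-1)}. \]
The crux is to bound $\|g\|_{Y_1}$ and $\|g\|_2$, each by the same quantity of order $P^{(k-1)/2k}\gamma M^{1/(2k-1)}$.

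For $\|g\|_{Y_1}$, one applies the pointwise-in-$p$ inequality $\|g_p\|_{2k/(2k-1)} \leq (p-1)^{(k-1)/2k}\|g_p\|_2 \leq P^{(k-1)/2k}\|g_p\|_2$ (just H\"older on $\{1,\dots,p-1\}$ since $2k/(2k-1)\leq 2$), raise to the $2k$th power, and sum over $p$, obtaining
\[ \|g\|_{Y_1}^{\,2k} \leq P^{k-1}\sum_{p\leq P}\|g_p\|_2^{\,2k} = P^{k-1}\gamma^{2k}\sum_p u_p^{\,2k/(2k-1)} = P^{k-1}\gamma^{2k}M^{q}. \]
For $\|g\|_2$, one instead applies H\"older \emph{across primes}, comparing the $\ell^{2/(2k-1)}$-norm of the sequence $(u_p)$ with its $\ell^{q}$-norm; since $2/(2k-1)\leq q$, this yields $\sum_p u_p^{2/(2k-1)} \leq M^{2/(2k-1)} P^{(k-1)/k}$, hence $\|g\|_2 \leq P^{(k-1)/2k}\gamma M^{1/(2k-1)}$. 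Adding the two estimates gives $\|g\|_Y \leq 2 P^{(k-1)/2k}\gamma M^{1/(2k-1)}$.

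Finally, choosing $\gamma = \bigl(2 P^{(k-1)/2k} M^{1/(2k-1)}\bigr)^{-1}$ makes $\|g\|_Y \leq 1$, and substituting back,
\[ \langle h,g\rangle = \gamma M^{q} = \frac{M^{\,q-1/(2k-1)}}{2P^{(k-1)/2k}} = \frac{M}{2P^{(k-1)/2k}}, \]
since $q-1/(2k-1) = 1$. This yields the claimed lower bound $2\|h\|_{Y^*} \geq P^{-(k-1)/2k} M$. The main obstacle I expect is purely bookkeeping: balancing the two applications of H\"older so that the exponents conspire to produce exactly $M$ (not some power of $M$) and exactly $P^{(k-1)/2k}$ in both of $\|g\|_{Y_1}$ and $\|g\|_2$; the identity $q-1/(2k-1) = 1$ is what makes everything line up, and it is why the ansatz $u_p^{1/(2k-1)-1}$ is the correct scaling.
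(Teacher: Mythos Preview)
Your proof is correct and takes a different route from the paper's. The paper first establishes the \emph{infimal} characterization $\|h\|_{Y^*}=\inf_{h=h_1+h_2}\max(\|h_1\|_{Y_1^*},\|h_2\|_2)$ via Hahn--Banach (viewing $Y$ as the diagonal in $Y\oplus Y$), then shows by two H\"older inequalities that the target quantity $\|h\|_{Y'}:=P^{-(k-1)/2k}\bigl(\sum_p\|h_p\|_2^{2k/(2k-1)}\bigr)^{(2k-1)/2k}$ is dominated by both $\|\cdot\|_{Y_1^*}$ and $\|\cdot\|_2$, whence $\|h\|_{Y'}\leq\|h_1\|_{Y_1^*}+\|h_2\|_2\leq 2\max(\cdots)$ for any decomposition. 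You instead use the \emph{supremal} characterization directly, building an explicit test function. The two arguments are precisely dual: your bound $\|g_p\|_{2k/(2k-1)}\leq P^{(k-1)/2k}\|g_p\|_2$ is the dual of the paper's inequality $\|\cdot\|_{Y'}\leq\|\cdot\|_{Y_1^*}$, and your across-primes H\"older is the dual of the paper's $\|\cdot\|_{Y'}\leq\|\cdot\|_2$. Your approach is more elementary (no Hahn--Banach needed) and constructive; the paper's makes the origin of the factor $2$ transparent as the loss in passing from $\max$ to sum. The paper's remark after the lemma in fact mentions the test-function viewpoint, but only for the ``pure'' duals of $h$ under $\|\cdot\|_{Y_1}$ or $\|\cdot\|_2$ separately; your hybrid choice, scaled so that the $\|g\|_{Y_1}$ and $\|g\|_2$ bounds coincide, is what recovers exactly the paper's inequality.

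One trivial slip: with the sesquilinear pairing $\langle h,g\rangle=\sum h_p(a)\overline{g_p(a)}$ and your choice $g_p\propto\overline{h_p}$, the pairing produces $\sum_a h_p(a)^2$ rather than $\sum_a|h_p(a)|^2$. Drop one of the two conjugates (either use the bilinear pairing, or take $g_p\propto h_p$); since all the norm bounds depend only on $|g_p|$, the rest of the argument is unaffected.
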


\begin{proof}
We use the following interpretation for the dual of the sum of two norms: 
\[ \|h\|_{Y^*}=\inf\{\max(\|h_1\|_{Y_1^*},\|h_2\|_2):h_1,h_2\in Y^*,h=h_1+h_2\}. \]
To see this, let $Y\oplus Y$ be the space equipped with the norm $\|(y_1,y_2)\|=\|y_1\|_{Y_1}+\|y_2\|_2$. The dual norm on $Y\oplus Y$ is easily seen to be $\|(y_1,y_2)\|_*=\max(\|y_1\|_{Y_1^*},\|y_2\|_2)$. View $Y$ as a subspace of $Y\oplus Y$ via the isometric diagonal embedding $i:Y\rightarrow Y\oplus Y$ with $i(y)=(y,y)$. 
For any linear functional $h\in Y^*$, Hahn-Banach theorem says that it can be extended to a linear functional $(h_1,h_2)\in (Y\oplus Y)^*$ with the same norm:
\[ \|h\|_{Y^*}=\|(h_1,h_2)\|_*=\max(\|h_1\|_{Y_1^*},\|h_2\|_2). \]
Since $(h_1,h_2)$ restricts to $h$ on $Y$, we have $h=h_1+h_2$. This proves that the left side is at least as large as the right side. For the other direction, note that for any $g\in Y$,
\[ \langle h,g\rangle=\langle h_1,g\rangle+\langle h_2,g\rangle\leq \|h_1\|_{Y_1^*}\|g\|_{Y_1}+\|h_2\|_2\|g\|_2\leq \|g\|_Y\cdot\max(\|h_1\|_{Y_1^*},\|h_2\|_2). \]

Now let $\|\cdot\|_{Y'}$ be the norm
\[ \|h\|_{Y'}=P^{-(k-1)/2k}\left(\sum_{p\leq P} \|h_p\|_2^{2k/(2k-1)}\right)^{(2k-1)/2k}. \]
By H\"{o}lder's inequality, the norm $\|\cdot\|_{Y'}$ is a lower bound for both the norm $\|\cdot\|_{Y_1^*}$ and the $L^2$-norm. Indeed, for the $\|\cdot\|_{Y_1^*}$ norm, since
\[ p^{(k-1)/2k}\|h_p\|_{2k}\geq \|h_p\|_2, \]
we have
\[ \|h\|_{Y_1^*}^{2k/(2k-1)}=\sum_{p\leq P} \|h_p\|_{2k}^{2k/(2k-1)}\geq\sum_{p\leq P} p^{-(k-1)/(2k-1)}\|h_p\|_2^{2k/(2k-1)}, \]
as desired. For the $L^2$-norm, we have
\[ \|h\|_2^2=\sum_{p\leq P}\|h_p\|_2^2\geq P^{-(k-1)/k}\left(\sum_{p\leq P}\|h_p\|_2^{2k/(2k-1)}\right)^{(2k-1)/k},  \]
as desired.

For any decomposition $h=h_1+h_2$, we have
\[ \|h\|_{Y'}\leq \|h_1\|_{Y'}+\|h_2\|_{Y'}\leq \|h_1\|_{Y_1^*}+\|h_2\|_2\leq 2\max(\|h_1\|_{Y_1^*},\|h_2\|_2). \]
The proof is completed by taking the infimum over all such decompositions.
\end{proof}

\begin{remark}
This lower bound is not sharp. Using the fact that
\[ \|h\|_{Y^*}=\sup_{0\neq g\in Y}\frac{\langle h,g\rangle}{\|g\|_{Y_1}+\|g\|_2}, \]
one can obtain two other lower bounds by taking the test function $g$ to be the dual of $h$ under the norm $\|\cdot\|_{Y_1}$ or the $L^2$-norm. It can be checked that none of these three lower bounds always beats any other. The lower bound in this lemma takes the simplest form and is enough for our applications.
\end{remark}

\begin{proof}[Proof of Theorem \ref{thm:large}]
We follow the notations set up earlier in this section. Consider the dual map $L^*:X^*\rightarrow Y^*$. Recall that $X^*$ is the space of functions on $[N]$ equipped with the $L^{2k/(2k-1)}$-norm. For any function $f\in X^*$, from the definition of $L$ we see that
\[ L^*f(a/p)=\sum_{n=1}^N f(n)e(-an/p)=\hat{f}(a/p). \]
Since $\|L^*\|=\|L\|\ll N^{1/2k}+P$, we conclude that for any function $f$,
\[ \|L^*f\|_{Y^*}\ll (N^{1/2k}+P)\|f\|_{2k/(2k-1)}. \]
The desired inequality follows by combining this with the lower bound in Lemma \ref{lem:dual-norm}.
\end{proof}

\section{Sieving arithmetic progressions}\label{sec:sieve-ap}

In this section we prove Theorem \ref{thm:sieve-ap-simple}, restated here with the slight generalization of allowing a small set of exceptional primes.

\begin{theorem}[Sieving arithmetic progressions]\label{thm:sieve-ap-larger}
Let $A\subset [N]$ be a subset. Let $\alpha\in [1/3,1/2]$ be real and $c>0$ be sufficiently small (not depending on $\alpha$). Let $\mathcal{P}$ be a subset of the primes up to $N^{\alpha}$ such that
\[ \sum_{\substack{p\notin\mathcal{P}\\ p\leq N^{\alpha}}}\frac{\log p}{p}<c\log N. \]
If, for each prime $p\in\mathcal{P}$, the residues $A\pmod p$ lie in an arithmetic progression $S_p\subset\Z/p\Z$ of length at most $\alpha p$, then $|A|\ll N^{\alpha-c}$.
\end{theorem}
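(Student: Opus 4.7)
The plan is to establish small doubling of $A$ and then invoke Proposition~\ref{prop:small-doubling}. I would argue by contradiction: suppose $|A|\geq N^{\alpha-c}$ and try to derive $|A-A|\leq|A|^{3/2-c_0}$ for some $c_0>0$. First I would apply the reductions from Section~\ref{sec:prelim}: by Lemmas~\ref{lem:non-uniform-size} and \ref{lem:non-uniform-fiber}, we may assume that for almost all primes $p\in\mathcal{P}$ (weighted by $\log p/p$), the image $A\pmod p$ nearly saturates its containing AP so that $|A\pmod p|\approx \alpha p$, and $A$ has approximately uniform fiber size $|A|/(\alpha p)$ over these residues; otherwise those lemmas directly yield $|A|\ll N^{\alpha-c}$. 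Under these normalizations, Lemma~\ref{lem:fiber} ensures that $A\pmod p$ essentially coincides with $S_p$ up to a small exceptional set.

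Next I would convert the AP hypothesis into sieve information for $A-A$. Since $S_p-S_p$ is an arithmetic progression of length $2\alpha p-1$, we have $(A-A)\pmod p\subset S_p-S_p$, so for each $p\in\mathcal{P}$ the set $A-A$ misses at least $(1-2\alpha)p$ residues modulo $p$. For $\alpha<1/2$ this is a positive density, and applying Corollary~\ref{cor:vaughan} gives $|A-A|\ll N^{1/2}$. To promote this to the strict bound $|A-A|\leq |A|^{3/2-c_0}$ demanded by Proposition~\ref{prop:small-doubling}, I would apply Theorem~\ref{thm:large} more carefully, with $P\approx N^{\alpha}$ and $k\approx 1/(2\alpha)$, exploiting the fact that the missed residues of $A-A\pmod p$ form a coherent AP rather than an arbitrary subset. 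Equivalently one can apply Theorem~\ref{thm:large} to the convolution $\nu=1_A\ast 1_{-A}$, whose modular distribution on $S_p-S_p$ is tent-shaped rather than uniform, which should boost $I_p(\nu)$ above the generic $O(|A|^4)$ and yield the extra power saving.

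Once small doubling is in hand, Proposition~\ref{prop:small-doubling} applies to $A$ with the same sieving set $\mathcal{P}$ (since the AP hypothesis trivially implies $|A\pmod p|\leq\alpha p$), producing $|A|\ll N^{\alpha-c'}$ for some $c'>0$, which yields the desired contradiction once the $c$ in the statement is chosen sufficiently small relative to $c'$. The main obstacle will be the strict small doubling step: at $\alpha=1/3$ the bound $|A-A|\ll N^{1/2}$ from Corollary~\ref{cor:vaughan} is exactly $|A|^{3/2}$ when $|A|\approx N^{1/3}$, so it sits on the boundary of Proposition~\ref{prop:small-doubling}'s hypothesis and any useful improvement has to extract a genuine power saving from the coherent shape of the missed residues. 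The boundary case $\alpha=1/2$ is more delicate, since $A-A$ no longer misses a constant-density subset of residues; there one presumably has to exploit the AP structure of $A\pmod p$ directly, for instance via the concentration of $\hat{1_A}$ at the frequencies $d_p/p$, where $d_p$ is the common difference of $S_p$.
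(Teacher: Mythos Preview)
Your overall architecture matches the paper's: pass to the difference set $A-A$, establish small doubling $|A-A|\leq |A|^{3/2-c_0}$, then invoke Proposition~\ref{prop:small-doubling}. (The preliminary reductions via Lemmas~\ref{lem:non-uniform-size}--\ref{lem:fiber} in your first paragraph are unnecessary here; they are already baked into the proof of Proposition~\ref{prop:small-doubling}.) The paper also dispatches the endpoint $\alpha$ close to $1/2$ by applying Theorem~\ref{thm:sieve-ap-large} directly to $A$ itself, since $S_p$ has length at most $p/2$.

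The genuine gap is the step you flag as ``the main obstacle'': obtaining the strict power saving $|A-A|\ll N^{1/2-c'}$ rather than just $N^{1/2}$. Your proposal is to apply Theorem~\ref{thm:large} ``more carefully'' to $\nu=1_A*1_{-A}$, exploiting the tent shape of its reduction mod~$p$. But this does not produce a power saving. Running the numbers: the AP structure gives $I_p(\nu)\gg |A|^4$, and H\"older gives $\sum_n|\nu(n)|^{2k/(2k-1)}\leq |A|^{4(k-1)/(2k-1)}E(A)^{1/(2k-1)}$, so Theorem~\ref{thm:large} with $P=N^{1/2k}$ yields only $E(A)\gg |A|^4 N^{-1/2-o(1)}$ (this is exactly Lemma~\ref{lem:lift}). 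A lower bound on additive energy does not by itself bound $|A-A|$ from above; a set can have $E(A)\approx |A|^4/N^{1/2}$ while $|A-A|$ is as large as $N^{1/2}$. The tent shape only modifies constants in $I_p(\nu)$, not the exponent.

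The paper closes this gap via Theorem~\ref{thm:sieve-ap-large}: since $(A-A)\pmod p$ lies in an AP of length at most $2\alpha p\leq (1-2c)p$ for $p\in\mathcal P$ (once $\alpha\leq 1/2-c$), that theorem applied to $A-A$ with $k=2$ gives $|A-A|\ll N^{1/2-c'}$. But Theorem~\ref{thm:sieve-ap-large} is not a direct consequence of the large sieve variant; its proof is an \emph{iterative} adaptation of the Green--Harper argument. One uses Lemma~\ref{lem:lift} to obtain large energy, extracts a popular difference $h$ with $|A\cap(A+h)|$ large, and shows (via Pollard's theorem and the larger sieve) that for a positive-density set of primes the intersected progression $S_p\cap(S_p+h)$ is strictly shorter, then iterates until the progressions are so short that the larger sieve finishes the job. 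This recursive shrinking of the $S_p$ is the missing idea; a single application of Theorem~\ref{thm:large} cannot see it.
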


If $\alpha$ is bounded away from $1/3$, this follows easily from Proposition \ref{prop:small-doubling}, since we have $|A+A|\ll N^{1/2}$ by the large sieve. Theorem \ref{thm:sieve-ap-larger} will be applied with $\alpha$ slightly above $1/3$, and it is crucial that the constant $c$ in the statement does not depend on how close $\alpha$ is to $1/3$. To handle the case when $\alpha\approx 1/3$, we need a bound better than $|A+A|\ll N^{1/2}$. This is achieved by the following:

\begin{theorem}[Sieving arithmetic progressions in the large sieve regime]\label{thm:sieve-ap-large}
Let $A\subset [N]$ be a subset. Let $k$ be a positive integer and $\epsilon>0$ be real. Let $c>0$ be sufficiently small depending on $k$ and $\epsilon$. Let $\mathcal{P}$ be a subset of the primes up to $N^{1/2k}$ satisfying
\[ \sum_{\substack{p\notin\mathcal{P}\\ p\leq N^{1/2k}}}\frac{\log p}{p}<c\log N. \]
If, for each $p\in\mathcal{P}$, the residues $A\pmod p$ lie in an arithmetic progression $S_p\subset\Z/p\Z$ of length at most $(1-\epsilon)p$, then $|A|\ll N^{1/2-c}$.
\end{theorem}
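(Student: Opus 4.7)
The plan is to combine Theorem \ref{thm:large} with the arithmetic-combinatorial consequences of the AP hypothesis.

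Starting point: apply Theorem \ref{thm:large} to $f = 1_A$ with $P = N^{1/2k}$. Since $|A\pmod p| \leq (1-\epsilon)p$, Cauchy--Schwarz gives
\[ I_p(1_A) = p\sum_r |A\cap (r+p\Z)|^2 - |A|^2 \geq \frac{\epsilon}{1-\epsilon}|A|^2 \]
for each $p \in \mathcal{P}$, which already forces $|A| \ll_{k,\epsilon} N^{1/2}(\log N)^{O(k)}$ via Theorem \ref{thm:large} (this is essentially the content of Corollary \ref{cor:vaughan}). To obtain the required power saving $N^{-c}$, I would use that $S_p$ is an \emph{arithmetic progression}, not merely a set of density $1-\epsilon$.

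Concretely, Cauchy--Davenport (Lemma \ref{lem:cdc}) is sharp on APs, so $S_p - S_p$ is itself an AP of length at most $2(1-\epsilon)p - 1$, containing $(A-A)\pmod p$. One natural route is to apply Theorem \ref{thm:large} to the representation function $g(n) = |\{(a,b) \in A^2 : a-b = n\}|$, whose Fourier transform equals $|\hat{1_A}|^2$; when $\epsilon > 1/2$ the AP containment together with Cauchy--Schwarz upgrades the lower bound to $I_p(g) \gg_\epsilon |A|^4$. For $\epsilon \leq 1/2$ one iterates to higher-fold sumsets, using Lemma \ref{lem:gr} and first applying Lemma \ref{lem:non-uniform-fiber} to ensure that $A$ is approximately uniformly distributed in $S_p$ for most $p \in \mathcal{P}$, so that the $h$-fold sumset is forced into a proper sub-AP of $\Z/p\Z$ once $h\epsilon > h-1$. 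The aim is either to extract a power saving directly from Theorem \ref{thm:large}, or else to obtain an estimate of the form $|A-A| \leq |A|^{3/2-c'}$, at which point Proposition \ref{prop:small-doubling} (applied in a form matched to the effective sieve level) should deliver $|A| \ll N^{1/2-c}$.

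The main obstacle, I expect, will be securing a genuine power saving rather than a merely logarithmic one. In the naive convolution route above, the improved lower bound $I_p(g) \gg |A|^4$ is essentially cancelled by the inflation of $\sum_n g(n)^{2k/(2k-1)}$ on the right-hand side of Theorem \ref{thm:large}, so the $N^{1/2}$ ceiling is not broken on its own. Overcoming this --- by constructing a test function that detects the AP structure more delicately than through support alone, by running a $J$-quantity argument in the style of Proposition \ref{prop:small-doubling} with Theorem \ref{thm:large} replacing Gallagher's larger sieve (balancing contributions from pairs $(a,b)$ with few mod-$p$ representations), or by exploiting the full Dirichlet-kernel shape of $\hat{1_{S_p^c}}$ rather than just its $L^2$-mass --- is the crux, and it is also where matching the sieve range $N^{1/2k}$ to Proposition \ref{prop:small-doubling}'s requirement of sieving up to $N^{\alpha}$ must be negotiated.
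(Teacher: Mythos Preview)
Your proposal identifies some correct ingredients --- applying Theorem \ref{thm:large} to a convolution, and recognising that the AP structure must be used beyond a mere density bound --- but it does not arrive at the mechanism that actually produces the power saving, and the routes you sketch either stall (as you yourself diagnose) or are blocked by the sieve-range mismatch.

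The paper's proof follows the Green--Harper descent from \cite{GH14}, with Theorem \ref{thm:large} standing in for the classical large sieve. The steps are as follows.
\begin{enumerate}
\item From the AP hypothesis, for each $p\in\mathcal{P}$ there is a single frequency $a$ with $|\widehat{1_A}(a/p)|\gg_\epsilon |A|$ (this is Lemma 4.1 of \cite{GH14}; it works for any $\epsilon>0$, no need for $\epsilon>1/2$). Applying Theorem \ref{thm:large} to $f=1_A*1_A$ with $P=N^{1/2k}$ then gives, via $I_p(f)\geq |\widehat{1_A}(a/p)|^4\gg |A|^4$ and H\"older on the right-hand side, the energy bound $E(A)\gg |A|^4 N^{-1/2}$ up to negligible losses (Lemma \ref{lem:lift}).
\item High energy yields a large set $H$ of popular differences: $|A\cap(A+h)|\gg |A|N^{-O(c)}$ for each $h\in H$, and $|H|\gg |A|N^{-O(c)}$.
\item \emph{(The step you are missing.)} Set $A'=A\cap(A+h)$; then $A'\pmod p\subset S_p\cap(S_p+h)$. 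An averaging argument together with Pollard's inequality shows that for some $h\in H$ the weighted average $\sum_{p\in\mathcal{P}}\frac{\log p}{p}\cdot\frac{|S_p\cap(S_p+h)|}{p}$ drops by a fixed factor $(1-\eta)$, $\eta=\eta(k,\epsilon)>0$. One replaces $A$ by $A'$ and $S_p$ by $S_p\cap(S_p+h)$.
\item Iterate. After $O_{k,\epsilon}(1)$ steps the residue sets become so thin that Gallagher's larger sieve, using only the primes up to $N^{1/2k}$, forces $|A_i|\ll N^{1/4}$, contradicting the maintained lower bound $|A_i|\geq N^{1/2-c_i}$.
\end{enumerate}

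Your attempted route through the support of $A-A$ in $S_p-S_p$ (and higher sumsets) runs into exactly the cancellation you describe: the gain in $I_p(g)$ is absorbed by $\sum_n g(n)^{2k/(2k-1)}$. The alternative of reaching Proposition \ref{prop:small-doubling} is blocked because that proposition needs sieving for primes up to $N^{\alpha}$ with $\alpha$ near $1/2$, whereas here you only control primes up to $N^{1/2k}$. The actual proof avoids both issues: Theorem \ref{thm:large} is used \emph{only} to certify high additive energy, and the power saving comes from the iterated intersection $A\mapsto A\cap(A+h)$ shrinking the residue sets, not from any single application of a sieve inequality.
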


This is a strengthening of Theorem 1.3 in \cite{GH14}, in that we only have information for primes up to $N^{1/2k}$ (this should be compared with Corollary \ref{cor:vaughan}). To adapt the argument in \cite{GH14}, we need to replace Lemma 5.1 there by the following lemma, which is a consequence of Theorem \ref{thm:large}. In this section we will simply state and prove this lemma. See appendix for details on the rest of the arguments that are more or less identical with those in Section 5 of \cite{GH14}.

\begin{lemma}[Lifting additive energy]\label{lem:lift}
Let the notation be as in Theorem \ref{thm:sieve-ap-large}. We have
\[ E(A)\gg |A|^3\cdot\frac{|A|}{\sqrt{N}}\cdot\left(\frac{|\mathcal{P}|}{N^{1/2k}}\right)^{2k-1}. \]
\end{lemma}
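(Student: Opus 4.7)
The plan is to apply Theorem~\ref{thm:large} to the autocorrelation function $\nu = 1_A * 1_{-A}$, whose Fourier transform is $\hat{\nu}(x) = |\hat{1_A}(x)|^2$ and which satisfies $\sum_h \nu(h)^2 = E(A)$. The AP hypothesis on $A \pmod p$ will feed into a lower bound on the sums $I_p(\nu) = \sum_{a=1}^{p-1} |\hat{1_A}(a/p)|^4$, while a H\"older-type inequality will convert the upper bound given by Theorem~\ref{thm:large} into a lower bound on $E(A)$.

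First, the AP hypothesis gives $|A \pmod p| \leq (1-\epsilon)p$ for each $p \in \mathcal{P}$. Cauchy--Schwarz applied to the multiplicity function $m_p(r) = |\{a \in A : a \equiv r \pmod p\}|$, together with Parseval's identity, yields $I_p(1_A) = p\sum_r m_p(r)^2 - |A|^2 \gg_\epsilon |A|^2$. A second Cauchy--Schwarz on the $(p-1)$-term sum $I_p(1_A) = \sum_a |\hat{1_A}(a/p)|^2$ then gives
\[ I_p(\nu) = \sum_{a=1}^{p-1} |\hat{1_A}(a/p)|^4 \geq \frac{I_p(1_A)^2}{p-1} \gg_\epsilon \frac{|A|^4}{p}. \]

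Next, Theorem~\ref{thm:large} applied to $\nu$ (shifted so that its support lies in $[0, 2N]$) with $P = N^{1/2k}$ produces
\[ \sum_{p \leq P} I_p(\nu)^{k/(2k-1)} \ll_k P^{(k-1)/(2k-1)}\, N^{1/(2k-1)} \sum_h \nu(h)^{2k/(2k-1)}. \]
A H\"older splitting $\nu^{2k/(2k-1)} = \nu^{2/(2k-1)} \cdot \nu^{(2k-2)/(2k-1)}$, together with $\sum_h \nu(h) = |A|^2$ and $\sum_h \nu(h)^2 = E(A)$, bounds the sum over $h$ by $E(A)^{1/(2k-1)} |A|^{(4k-4)/(2k-1)}$. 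Restricting the left-hand side to $p \in \mathcal{P}$ and using $p \leq P$ gives $\sum_p I_p(\nu)^{k/(2k-1)} \gg |\mathcal{P}| \bigl(|A|^4/P\bigr)^{k/(2k-1)}$; raising the resulting inequality to the $(2k-1)$-th power and rearranging then produces the claimed lower bound on $E(A)$.

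The main obstacle will be obtaining the sharp exponent $(3k-1)/(2k)$ on $N$ in the denominator. A naive implementation of the above plan yields the exponent $(4k-1)/(2k)$ instead, losing a factor of $\sqrt{N}$; this loss can be traced to the Cauchy--Schwarz step bounding $I_p(\nu)$, which uses only the size of $A \pmod p$ and not the fact that it forms an arithmetic progression. To recover the missing $\sqrt{N}$, one expects to need a sharper lower bound on $I_p(\nu)$ that exploits the actual progression shape---for example, via the factorization $\hat{1_A}(a/p) = e(-a s_0/p)\,\hat{v}_p(a d_p/p)$, where $v_p$ is the multiplicity function supported on the AP $S_p$ of length at most $(1-\epsilon)p$, forcing the mass of $|\hat{1_A}(a/p)|^2$ to concentrate at a small number of ``resonant'' frequencies---or alternatively to bypass this step by first bounding $|A - A|$ via the large sieve variant applied to $1_{A-A}$, thereby reducing the effective interval length appearing in Theorem~\ref{thm:large}.
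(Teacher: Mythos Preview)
Your overall framework is exactly the one the paper uses: apply Theorem~\ref{thm:large} to a self-convolution of $1_A$ (the paper uses $f=1_A*1_A$, which has the same $|\hat f|$ as your $\nu$), bound the $L^{2k/(2k-1)}$ norm of $f$ by the same H\"older splitting you describe, and restrict the left-hand sum to $p\in\mathcal{P}$. The only substantive difference is the lower bound you feed in for $I_p(\nu)$, and this is precisely where your proposal stalls.

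Your two applications of Cauchy--Schwarz use only that $|A\pmod p|\leq (1-\epsilon)p$ and give $I_p(\nu)\gg_\epsilon |A|^4/p$. As you correctly diagnose, this loses exactly a factor $P^{k/(2k-1)}$ on the left side, hence $P^k=\sqrt{N}$ after raising to the $(2k-1)$-th power. The fix is much simpler than either of the remedies you sketch: the AP hypothesis gives not merely a large $\ell^2$ sum $\sum_a |\hat{1_A}(a/p)|^2$, but a single large coefficient. Concretely, if $A\pmod p$ lies in a progression $\{s,s+d,\dots,s+(L-1)d\}$ with $L\leq(1-\epsilon)p$, and $b$ is chosen so that $bd\equiv 1\pmod p$, then all the summands in $\hat{1_A}(b/p)$ point into a common half-plane (after a global rotation), and one gets $|\hat{1_A}(b/p)|\gg_\epsilon |A|$. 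This is Lemma~4.1 of \cite{GH14}, which the paper invokes directly. Plugging this single term into $I_p(\nu)=\sum_a |\hat{1_A}(a/p)|^4$ already yields $I_p(\nu)\gg_\epsilon |A|^4$, with no $1/p$ loss, and the rest of your argument then goes through to give the stated bound. Your ``resonant frequency'' intuition was pointing at exactly this phenomenon; you just need to use it as a pointwise bound rather than trying to refine the averaged estimate.
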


Here $E(A)$ is the additive energy of $A$, defined by the number of quadruples $(a_1,a_2,a_3,a_4)\in A\times A\times A\times A$ with $a_1+a_2=a_3+a_4$.

\begin{proof}
We will apply Theorem \ref{thm:large} with $f=1_A*1_A$. For each $p\in\mathcal{P}$, by Lemma 4.1 of \cite{GH14} there exists $1\leq a\leq p-1$ such that $|\widehat{1_A}(a/p)|\gg |A|$. Hence for $p\in\mathcal{P}$, we have $I_p(f)\gg |A|^4$. By H\"{o}lder's inequality,
\begin{align*} 
\sum_{n=1}^N|1_A*1_A(n)|^{2k/(2k-1)} &\leq \left(\sum_{n=1}^N 1_A*1_A(n)\right)^{(2k-2)/(2k-1)}\left(\sum_{n=1}^N 1_A*1_A(n)^2\right)^{1/(2k-1)} \\
&\leq |A|^{4(k-1)/(2k-1)}E(A)^{1/(2k-1)}.
\end{align*}
Hence Theorem \ref{thm:large} with $P=N^{1/2k}$ gives
\[ |\mathcal{P}||A|^{4k/(2k-1)}\ll P^{(k-1)/(2k-1)}N^{1/(2k-1)}|A|^{4(k-1)/(2k-1)}E(A)^{1/(2k-1)}. \]
This simplifies to the desired inequality.
\end{proof}

\begin{proof}[Proof of Theorem \ref{thm:sieve-ap-larger}]
In view of Theorem \ref{thm:sieve-ap-large} we may assume that $\alpha\leq 1/2-c$ for some small $c>0$. Consider the difference set $A-A$. For each prime $p\in\mathcal{P}$, the residues $(A-A)\pmod p$ lie in an arithmetic progression of length at most $2\alpha p\leq (1-2c)p$. By Theorem \ref{thm:sieve-ap-large} applied to $A-A$ (with $k=2$ say), we have $|A-A|\ll N^{1/2-c'}$ for some small $c'>0$ depending on $c$. Hence either $|A|\ll N^{\alpha-c'/2}$ and we are done, or else $|A-A|\ll |A|^{(1-2c')/(2\alpha-c')}\ll |A|^{(3-c')/2}$ since $\alpha\geq 1/3$, and the conclusion follows from Proposition \ref{prop:small-doubling}.
\end{proof}

\section{The inverse ternary Goldbach problem}\label{sec:proof-main}

\begin{proposition}\label{prop:main-weak}
Let $\epsilon>0$. If $A_1,A_2,A_3\subset [N]$ be subsets with $|A_1|,|A_2|,|A_3|\geq N^{1/3+\epsilon}$, then $A_1+A_2+A_3$ contains a composite number.
\end{proposition}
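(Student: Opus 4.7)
The plan is to argue by contradiction: assume that $A_1+A_2+A_3$ consists entirely of primes, and derive a contradiction from Gallagher's larger sieve (Lemma~\ref{lem:gallagher}). The key observation is that a prime larger than $N^{\alpha}$ is not divisible by any $p \le N^{\alpha}$, so once I trim one of the sets of its tiny elements, every triple-sum avoids the zero residue class modulo every prime $p \le N^{\alpha}$.

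Concretely, I set $\alpha = 1/3 + \epsilon/2$ and replace $A_3$ by $A_3' := A_3 \cap (N^{\alpha}, N]$, which still satisfies $|A_3'| \ge N^{1/3+\epsilon}/2$ once $N$ is large. Every sum $a_1+a_2+a_3$ with $a_3 \in A_3'$ exceeds $N^{\alpha}$ and, being prime, avoids the residue $0$ modulo each prime $p \le N^{\alpha}$. Hence $(A_1\pmod p) + (A_2\pmod p) + (A_3'\pmod p)$ avoids $0$ in $\Z/p\Z$, and two applications of Cauchy-Davenport-Chowla (Lemma~\ref{lem:cdc}) force
\[ |A_1\pmod p| + |A_2\pmod p| + |A_3'\pmod p| \le p+1 \]
for every prime $p \le N^{\alpha}$. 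Applying Lemma~\ref{lem:am-gm} with unit weights converts this into
\[ \frac{1}{|A_1\pmod p|} + \frac{1}{|A_2\pmod p|} + \frac{1}{|A_3'\pmod p|} \ge \frac{9}{p+1}. \]

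Multiplying by $\log p$, summing over primes $p \le N^{\alpha}$, and using the preliminary estimate $\sum_{p \le Q} \log p/p = \log Q + O(1)$, the three sums together total at least $9\alpha \log N + O(1)$. By pigeonhole, one of $A \in \{A_1, A_2, A_3'\}$ satisfies
\[ \sum_{p \le N^{\alpha}} \frac{\log p}{|A\pmod p|} \ge 3\alpha \log N + O(1) = \bigl(1 + \tfrac{3\epsilon}{2}\bigr)\log N + O(1) > \log N + 1 \]
for $N$ large. Gallagher's larger sieve then yields $|A| \ll N^{\alpha} = N^{1/3+\epsilon/2}$, which contradicts $|A| \ge N^{1/3+\epsilon}/2$.

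There is no real obstacle here: this is the classical Gallagher-larger-sieve argument that already produces the $N^{1/3+o(1)}$ bound of \cite{PSS88}. The substantive work of the paper, namely pushing the exponent from $1/3+\epsilon$ down to $1/3-c$ in Theorem~\ref{thm:main}, is carried out in Sections~\ref{sec:small-doubling}, \ref{sec:large}, and~\ref{sec:sieve-ap}, and is not needed for Proposition~\ref{prop:main-weak} itself.
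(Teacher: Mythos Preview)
Your proof is correct and is precisely the classical argument alluded to in the paper (which simply cites Theorem~3 of \cite{PSS88}): prune so that sums exceed $N^{\alpha}$, use Cauchy--Davenport to get $|A_1\bmod p|+|A_2\bmod p|+|A_3'\bmod p|\le p+1$, convert via AM--HM to a lower bound on $\sum \log p/|A_i\bmod p|$, and conclude by Gallagher's larger sieve. This is exactly the route the paper sketches in Section~1.1 and implements (in a more refined form) in the proof of Theorem~\ref{thm:main}.
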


\begin{proof}
See Theorem 3 in \cite{PSS88}.
\end{proof}

\begin{proof}[Proof of Theorem \ref{thm:main}]
Let $A_1,A_2,A_3\subset [N]$ be subset with $|A_1|,|A_2|,|A_3|\geq N^{1/3-c}$. Suppose, for the purpose of contradiction, that $a_1+a_2+a_3$ is prime for any $a_1\in A_1,a_2\in A_2,a_3\in A_3$. 
By Proposition \ref{prop:main-weak}, we have $\min(|A_1\cap [N^{0.6}]|,|A_2\cap [N^{0.6}]|,|A_3\cap [N^{0.6}]|)\ll N^{0.21}$. We may therefore assume that all elements of $A_1+A_2+A_3$ are larger than $N^{0.6}$. Hence for all primes $p\leq N^{0.6}$ we have by Lemma \ref{lem:cdc},
\[ |A_1\pmod p|+|A_2\pmod p|+|A_3\pmod p|\leq p+1. \]

Let $c'>0$ be sufficiently small depending on $c$. By the larger sieve (Lemma \ref{lem:gallagher2}) we may assume that
\[ \sum_{p\leq N^{1/3-c'}}\frac{\log p}{p}\cdot\frac{|A_i\pmod p|}{p}\geq \left(\frac{1}{3}-2c'\right)\sum_{p\leq N^{1/3-c'}}\frac{\log p}{p} \]
for each $i=1,2,3$, since otherwise we would have $|A_i|\ll N^{1/3-c'}$ as desired. Note that the sum of the left side above over $i=1,2,3$ is at most
\[  \sum_{p\leq N^{1/3-c'}}\frac{\log p}{p}\cdot\frac{p+1}{p}=\sum_{p\leq N^{1/3-c'}}\frac{\log p}{p}+O(1). \]
It follows that
\[ \sum_{p\leq N^{1/3-c'}}\frac{\log p}{p}\cdot\frac{|A_i\pmod p|}{p}\leq \left(\frac{1}{3}+5c'\right)\sum_{p\leq N^{1/3-c'}}\frac{\log p}{p} \]
for each $i=1,2,3$. Let $\mathcal{P}_i$ be the set of primes $p\leq N^{1/3-c'}$ with $|A_i\pmod p|\leq (1/3-2c)p$, and let $\mathcal{P}$ be the set of primes $p\leq N^{1/3-c'}$ outside $\mathcal{P}_1,\mathcal{P}_2,\mathcal{P}_3$. By Lemma \ref{lem:non-uniform-size} we may assume that
\[ \sum_{p\in\mathcal{P}_i}\frac{\log p}{p}<c\sum_{p\leq N^{1/3-c'}}\frac{\log p}{p}<\frac{c}{3}\log N, \]
since otherwise we would have $|A_i|\ll N^{1/3-c'}$ as desired. Hence
\[ \sum_{\substack{p\notin\mathcal{P}\\ p\leq N^{1/3+15c}}}\frac{\log p}{p}\leq \sum_{p\in\mathcal{P}_1}\frac{\log p}{p}+\sum_{p\in\mathcal{P}_2}\frac{\log p}{p}+\sum_{p\in\mathcal{P}_3}\frac{\log p}{p}+\sum_{N^{1/3-c'}<p\leq N^{1/3+15c}}\frac{\log p}{p}  <20c\log N. \]
Note that for $p\in\mathcal{P}$, we have $|A_i\pmod p|>(1/3-2c)p$ for each $i$, and for any permutation $(i,j,k)$ of $\{1,2,3\}$, by Lemma \ref{lem:cdc} again we have
\[ |(A_i\pmod p)+(A_j\pmod p)|\leq p-|A_k\pmod p|\leq \left(\frac{2}{3}+2c\right)p. \]
By Lemma \ref{lem:gryn}, we conclude that each $A_i\pmod p$ for $p\in\mathcal{P}$ is contained in an arithmetic progression of length $(1/3+15c)p$. By Theorem \ref{thm:sieve-ap-larger}, we have $|A|\ll N^{1/3-c}$ for sufficiently small $c>0$.
\end{proof}

\section{Further remarks on the larger sieve}\label{sec:sharp}

In this last section we remark on the sharpness of the larger sieve. Recall that the larger sieve says that, if a subset $A\subset [N]$ occupies at most $\alpha p$ residue classes modulo $p$ for each $p\leq N^{\alpha}$, then $|A|\ll N^{\alpha}$. When $\alpha=1/2$ (in the large sieve regime), the bound $|A|\ll N^{1/2}$ is sharp by taking $A$ to be the set of squares up to $N$, so that $A\pmod p$ is the set of quadratic residues. 

For the rest of the discussion we are concerned with what happens for smaller $\alpha$, say $\alpha=1/d$ for some $d\geq 3$. Is the bound $|A|\ll N^{1/d}$ sharp? A tempting example to try is to take $A$ to be the set of $d$th powers up to $N$. Then $A\pmod p$ occupies $(p-1)/\text{gcd}(p-1,d)+1$ residue classes; this number is much larger than $p/d$ whenever $d\nmid p-1$. In the case when $d=3$, we have
\[ \sum_{p\leq Q}\frac{\log p}{|A\pmod p|}\sim\sum_{\substack{p\leq Q\\ p\equiv 1\pmod 3}}\frac{3\log p}{p}+\sum_{\substack{p\leq Q\\ p\equiv 2\pmod 3}}\frac{\log p}{p}\sim 2\log Q.  \]
Hence Gallagher's larger sieve only gives the upper bound $|A|\ll N^{1/2}$. Some variants of the larger sieve were obtained in \cite{CE04}, but they do not give better bounds in this situation. Thus we have no obvious evidence against the following conjecture.

\begin{conjecture}
Let $d\geq 3$. If $A\subset [N]$ is a subset that occupies at most $(1/d+o(1))p$ residue classes modulo $p$ for all primes $p\leq N^{1/d}$, then $|A|\ll N^{1/d-c}$ for some small $c=c(d)>0$.
\end{conjecture}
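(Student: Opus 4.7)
The plan is to imitate the strategy developed for the ternary inverse Goldbach problem in Section \ref{sec:proof-main}, attempting to reduce the hypothesis-only conjecture to the arithmetic-progression case handled by Theorem \ref{thm:sieve-ap-larger}. Assume for contradiction that $|A| \geq N^{1/d - c}$ with $c$ small. By Lemma \ref{lem:non-uniform-size}, we may pass to a subset of primes on which $|A \pmod p| = (1/d + o(1))p$; otherwise Gallagher's larger sieve in its refined form already yields the required bound. Lemma \ref{lem:non-uniform-fiber} together with the uniform fiber property (Lemma \ref{lem:fiber}) then lets us further assume that $A$ is almost equidistributed over these $\approx p/d$ residues for most of the relevant primes.

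The crucial step is to upgrade the size bound on $|A \pmod p|$ to the structural assertion that $A \pmod p$ is contained in an arithmetic progression of length $(1/d + o(1))p$ for most primes. For $d = 3$, this would be equivalent, via Lemma \ref{lem:gryn}, to showing $|(A - A) \pmod p| \leq (2/3 + o(1))p$, since Lemma \ref{lem:cdc} already delivers the matching lower bound $|(A - A) \pmod p| \geq 2|A \pmod p| - 1 = (2/3 - o(1))p$. If such an AP structure can be secured, then Theorem \ref{thm:sieve-ap-larger} immediately yields $|A| \ll N^{1/3 - c'}$, a contradiction. One could attempt to produce the upper bound on $|(A-A) \pmod p|$ by applying a Proposition \ref{prop:small-doubling}-style argument to $A-A$ viewed as a subset of $[-N,N]$, iterating the larger sieve hypothesis.

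For $d \geq 4$, the bound $|(A - A) \pmod p| \geq (2/d)p$ coming from Cauchy--Davenport falls well short of the $(1 - \epsilon)p$ range treated by Theorem \ref{thm:sieve-ap-large}, so the direct reduction breaks down. Instead, one would consider higher sumsets $kA$: their expected residue-class count $\approx (k/d)p$ enters the $(1-\epsilon)p$ range once $k$ is close to $d$. An inductive scheme in $d$ or $k$, feeding weaker forms of the conjecture into estimates on iterated sumsets and invoking the large-sieve variant of Theorem \ref{thm:large} as in Lemma \ref{lem:lift}, is a natural but speculative route. A complementary possibility is a polynomial-method-style argument \`a la Bombieri--Pila--Heath-Brown: if $A$ saturates the larger sieve bound for many $p$, one might hope to parametrize $A$ by a low-degree algebraic curve and bound its integer points directly.

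The main obstacle is that the hypothesis $|A \pmod p| \leq (1/d + o(1))p$ by itself yields no information about $|(A - A) \pmod p|$, no large Fourier coefficient of $1_A$ (unlike in Lemma \ref{lem:lift}, where the AP hypothesis is used), and no bound on the additive energy of $A$, so there is no obvious mechanism to extract additive or algebraic structure. Even in the $d = 3$ case, forcing the AP structure on $A \pmod p$ for most $p$ appears to require a genuinely new ingredient; without it, the reduction to Theorem \ref{thm:sieve-ap-larger} does not go through. This is consistent with the conjecture being the larger-sieve counterpart of the inverse large sieve conjecture, which is generally believed to be very deep.
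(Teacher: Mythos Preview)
The statement you were asked to prove is a \emph{conjecture}, and the paper does not prove it. Immediately after stating it, the author writes: ``Not having a clue of proving this conjecture, we instead proved an improved larger sieve assuming additive structure, which is sufficient for our main theorem with the help of results from additive combinatorics.'' So there is no proof in the paper to compare your proposal against.

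Your proposal is not a proof either, and you are candid about this in the final paragraph. The outline you give is a reasonable survey of how far the paper's machinery reaches toward the conjecture, and your diagnosis of the central obstacle is exactly right: the bare hypothesis $|A \pmod p|\leq (1/d+o(1))p$ carries no additive information, so there is nothing to feed into Lemma~\ref{lem:gryn} or Theorem~\ref{thm:sieve-ap-larger}. In Section~\ref{sec:proof-main} the arithmetic-progression structure on $A_i\pmod p$ is not extracted from a size bound alone; it comes from the \emph{extra} constraint that $A_1+A_2+A_3$ avoids $0\pmod p$, which via Cauchy--Davenport forces $|(A_i+A_j)\pmod p|+|A_k\pmod p|\leq p$ and hence small doubling modulo $p$. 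Nothing analogous is available under the conjecture's hypotheses. Your suggestion to bound $|(A-A)\pmod p|$ by ``a Proposition~\ref{prop:small-doubling}-style argument'' is circular: that proposition requires a global small-doubling hypothesis on $A$ as input, which is precisely what is missing.

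In short: the conjecture is open, the paper says so explicitly, and your write-up correctly identifies why the paper's techniques do not settle it.
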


This conjecture, if true, would immediately lead to Theorem \ref{thm:main}, just as the usual larger sieve leads to Proposition \ref{prop:main-weak}. Not having a clue of proving this conjecture, we instead proved an improved larger sieve assuming additive structure, which is sufficient for our main theorem with the help of results from additive combinatorics.

As further evidences towards this conjecture, we remark that if $A$ is chosen to be the value set of any polynomial of degree $d$:
\[ A=\{P(x):x\in\Z\}\cap [N] \]
for some $P(x)\in\Z[x]$ with degree $d$, then the larger sieve applied to $A$ always leads to a bound worse than $|A|\ll N^{1/d}$. In fact, by a result in \cite{Gom88}, we know that if $d\nmid p-1$, then the value set of the polynomial $P(x)$ modulo $p$ is at least $(1/d+2/d^2)p$. Therefore
\[ \sum_{p\leq Q}\frac{\log p}{|A\pmod p|}\leq d\log Q\left[\frac{1}{\phi(d)}+\frac{d}{d+2}\left(1-\frac{1}{\phi(d)}\right)\right], \]
always smaller than $d\log Q$.

Finally, consider the related quantity
\[ \sum_{p\leq Q}\frac{\log p}{p}\cdot\frac{|A\pmod p|}{p}. \]
When $d=3$ and $A$ is the set of cubes up to $N$, this is about $(2/3)\log Q$. In other words, the average size of the value set of the polynomial $P(x)=x^3$ modulo primes is $2/3$.
We claim that the situation does not improve if $A$ is taken to be the value set of other cubic polynomials, in the sense that this average is always $2/3$ asymptotically. This is a consequence of the Chebotarev density theorem. Indeed, the quantity we are interested in is asymptotically equal to
\[ \frac{1}{R}\sum_{r=1}^R\sum_{\substack{p\leq Q\\ p\in\mathcal{P}_r}}\frac{\log p}{p}, \]
where $\mathcal{P}_r$ is the set of primes $p$ such that $P(x)\equiv r\pmod p$ has a solution, and $R\geq Q$ is large. The inner sum is the density of the primes (weighted by $\log p/p$) for which the congruence $P(x)-r\equiv 0\pmod p$ has a solution, and can be evaluated by the following result.

\begin{nonumtheorem}
Given an irreducible polynomial $P(x)\in \Z[x]$, let $L$ be the splitting field of $P$ over $\Q$ and $G=\text{Gal}(L/\Q)$. Let $K$ be the intermediate field $K=\Q[x]/(P(x))$ and let $H=\text{Gal}(L/K)$ be a subgroup of $G$. The density of the set of primes for which the congruence $P(x)\equiv 0\pmod p$ has a solution is equal to
\[ |G|^{-1}\left|\bigcup_{\sigma\in G}\sigma^{-1}H\sigma\right|. \]
\end{nonumtheorem}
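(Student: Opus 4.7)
My plan is to apply the Chebotarev density theorem, the only substantive input being a translation of the condition ``$P$ has a root mod $p$'' into a group-theoretic condition on the Frobenius conjugacy class.

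First I would set up the group-theoretic framework. Since $P$ is irreducible, $G = \text{Gal}(L/\Q)$ acts transitively on the set of roots $\{\alpha_1,\dots,\alpha_n\}$ of $P$ in $L$, and the stabilizer of $\alpha_1$ is precisely $H = \text{Gal}(L/K)$ for $K = \Q(\alpha_1)$. The $G$-set of roots is therefore $G$-equivariantly identified with the coset space $G/H$ via $gH \mapsto g(\alpha_1)$.

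Next, the finite set of primes dividing $\text{disc}(P)$ or ramifying in $L/\Q$ makes no contribution to density, so I may restrict to primes $p$ for which the reduction $\bar P \in \F_p[x]$ is separable and $p$ is unramified in $L$. For such a $p$, pick any prime $\mathfrak{P}$ of $\mathcal{O}_L$ above $p$ and let $\sigma = \text{Frob}_{\mathfrak{P}} \in G$; its conjugacy class depends only on $p$. Reduction modulo $\mathfrak{P}$ sends $\{\alpha_1,\dots,\alpha_n\}$ bijectively onto the roots of $\bar P$ in $\overline{\F_p}$, and under this bijection the action of $\sigma$ matches the action of the geometric Frobenius $x \mapsto x^p$.

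The key translation is then immediate: $P$ has a root mod $p$ iff $\bar P$ has a root in $\F_p$, iff some root of $\bar P$ is fixed by $x \mapsto x^p$, iff some coset $gH \in G/H$ is fixed by $\sigma$, which is equivalent to $g^{-1}\sigma g \in H$, i.e.\ $\sigma \in gHg^{-1}$ for some $g \in G$. Hence $P$ has a root mod $p$ exactly when $\sigma$ lies in the conjugation-invariant set
\[ S = \bigcup_{g \in G} g H g^{-1} = \bigcup_{\sigma \in G} \sigma^{-1} H \sigma. \]
Applying the Chebotarev density theorem to this union of conjugacy classes yields the density $|S|/|G|$, matching the statement. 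There is no genuine obstacle; the only technical content is the standard dictionary from algebraic number theory between the arithmetic Frobenius in the decomposition group and the geometric Frobenius on residue fields, which ensures that the fixed-point count of $\sigma$ acting on $G/H$ equals the number of roots of $P$ in $\F_p$.
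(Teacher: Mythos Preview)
Your argument is correct and is exactly the standard proof of this fact via Chebotarev; the paper itself does not give a proof but simply cites Theorem~2 in \cite{BB96}, where the same argument appears. There is nothing to compare beyond noting that you have supplied the details the paper omits.
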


\begin{proof}
See Theorem 2 in \cite{BB96}.
\end{proof}

Although the density above refers to the natural density, it is easy to convert it into the density weighted by $\log p/p$ in our situation by partial summation. We omit the details here. For generic $r$ sufficiently large (depending on $P$), $P(x)-r$ is irreducible and the Galois group of the splitting field of $P(x)-r$ is the symmetric group $S_3$, and the subgroup $H$ in the statement above has order $2$. The union of the three conjugates of $H$ has size $4$, and thus the inner sum is $\sim 2/3$, as claimed.

\begin{appendix}

\section{Sieving arithmetic progressions in the large sieve regime}

In this appendix we give details on proving Theorem \ref{thm:sieve-ap-large} by adapting the arguments in Section 5 of \cite{GH14}. We recall the statement:

\begin{nonumtheorem}
Let $A\subset [N]$ be a subset. Let $k$ be a positive integer and $\epsilon>0$ be real. Let $c>0$ be sufficiently small depending on $k$ and $\epsilon$. Let $\mathcal{P}$ be a subset of the primes up to $N^{1/2k}$ satisfying
\[ \sum_{\substack{p\notin\mathcal{P}\\ p\leq N^{1/2k}}}\frac{\log p}{p}<c\log N. \]
Suppose that for each $p\in\mathcal{P}$, there is an arithmetic progression $S_p\subset\Z/p\Z$ of length at most $(1-\epsilon)p$, such that $A\pmod p\subset S_p$. Then $|A|\ll N^{1/2-c}$.
\end{nonumtheorem}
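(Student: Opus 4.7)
The plan is to follow the strategy of Section 5 of \cite{GH14}, substituting Lemma \ref{lem:lift} of the present paper for their Lemma 5.1; the remaining steps go through essentially verbatim.

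First I would convert the AP hypothesis into Fourier information at prime denominators. Lemma 4.1 of \cite{GH14} says that if $A \pmod p \subset S_p$ with $S_p$ an arithmetic progression of length at most $(1-\epsilon)p$, then there exists $1 \le a_p \le p-1$ with $|\widehat{1_A}(a_p/p)| \gg_\epsilon |A|$. Squaring gives $|\widehat{1_A * 1_A}(a_p/p)| \gg |A|^2$, hence $I_p(1_A * 1_A) \gg |A|^4$ for each $p \in \mathcal{P}$, which is exactly the input used in the proof of Lemma \ref{lem:lift}. That lemma then yields
\[ E(A) \gg_{k,\epsilon} \frac{|A|^4}{\sqrt{N}} \left(\frac{|\mathcal{P}|}{N^{1/2k}}\right)^{2k-1}. \]
The hypothesis on exceptional primes combined with Mertens' theorem forces $|\mathcal{P}| \gg_k N^{1/2k}/\log N$, so the parenthetical factor is $(\log N)^{-O_k(1)}$.

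Next, suppose for contradiction that $|A| \ge N^{1/2-c}$. The energy bound then reads $E(A) \ge |A|^3/K$ with $K \le N^{2c}(\log N)^{O_k(1)}$, which is $|A|^{o(1)}$ once $c$ is sufficiently small. By the Balog--Szemer\'edi--Gowers theorem one extracts $A' \subset A$ with $|A'| \gg |A| K^{-O(1)} \ge N^{1/2-O(c)}$ and doubling $|A' - A'| \le K^{O(1)} |A'| \le |A'|^{1 + O_k(c)}$, so $A'$ has tiny doubling.

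To close the argument I would, following \cite{GH14}, apply Freiman's theorem to $A'$ to place it inside a proper generalized arithmetic progression of bounded dimension and size $O(|A'|^{1+o(1)})$. The residues of $A'$ mod each $p \in \mathcal{P}$ still fit into the short AP $S_p$, and this is consistent with a GAP of dimension $\ge 2$ only for very few $p$, forcing the GAP to be essentially one-dimensional. A genuine one-dimensional AP in $\Z$ of length $L$, reduced modulo a prime $p$ coprime to its common difference, becomes an AP of length $\min(L,p)$ in $\Z/p\Z$; the condition that every such reduction (for almost every $p \le N^{1/2k}$) fit inside an AP of length $(1-\epsilon)p$ then forces $L \ll N^{1/2-c}$, contradicting $|A'| \ge N^{1/2-O(c)}$. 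The main obstacle is exactly this last structural step: turning tiny doubling plus AP residue constraints into savings of $N^c$ over the large-sieve bound $N^{1/2}$ (Corollary \ref{cor:vaughan}), where one must genuinely use that $S_p$ is an AP and not merely a set of size $(1-\epsilon)p$.
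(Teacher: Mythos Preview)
Your first two steps are right and match the paper: Lemma 4.1 of \cite{GH14} converts the AP hypothesis into a large Fourier coefficient at each $p\in\mathcal{P}$, and Lemma \ref{lem:lift} (replacing Lemma 5.1 of \cite{GH14}) then gives $E(A)\gg_{k,\epsilon}|A|^4 N^{-1/2}(\log N)^{-O_k(1)}$.

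From the energy bound onward, however, your sketch is \emph{not} the strategy of Section 5 of \cite{GH14}, and the gap you flag as the ``main obstacle'' is real and unaddressed. The quantitative issue is that after Balog--Szemer\'edi--Gowers the doubling of $A'$ is only bounded by $N^{O(c)}$, not by an absolute constant; Freiman's theorem then places $A'$ in a generalized progression whose dimension grows with $N$ (of order at least $(c\log N)^{O(1)}$ even with the strongest known bounds), so ``bounded dimension'' is not justified. Even granting a low-dimensional GAP, the claim that a rank $\ge 2$ progression can sit inside an AP of length $(1-\epsilon)p$ for only few primes $p$ is far from obvious, and you give no mechanism for it.

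The paper (following \cite{GH14}) avoids Freiman-type structure theorems entirely and instead runs an iterative \emph{container-shrinking} argument. Given $A_i$ with $|A_i|\ge N^{1/2-c_i}$ and containers $S_p^i\supset A_i\pmod p$, one uses Lemma \ref{lem:lift} and a popular-difference lemma (Lemma 3.6 of \cite{GH14}) to produce a large set $H\subset[-N,N]$ of shifts with $|A_i\cap(A_i+h)|\ge N^{1/2-c_{i+1}}$ for all $h\in H$. The crux is then to show, via a consequence of Pollard's theorem (Lemma 3.8 of \cite{GH14}) and a larger-sieve bound applied to $H$ itself, that some $h\in H$ satisfies
\[
\sum_{p\in\mathcal{P}}\frac{\log p}{p}\cdot\frac{|S_p^i\cap(S_p^i+h)|}{p}\le(1-\eta)\sum_{p\in\mathcal{P}}\frac{\log p}{p}\cdot\frac{|S_p^i|}{p}
\]
for a fixed $\eta=\eta(k,\epsilon)>0$. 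Setting $A_{i+1}=A_i\cap(A_i+h)$ and $S_p^{i+1}=S_p^i\cap(S_p^i+h)$ and iterating $O_{k,\epsilon}(1)$ times drives the average container size below $p/3k$, whereupon Lemma \ref{lem:gallagher2} gives $|A_i|\ll N^{1/2k}$, contradicting the maintained lower bound $|A_i|\ge N^{1/2-c_i}$.
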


Obviously the statement is stronger when $k$ is larger. We may thus assume that $k\geq 2$ and write $Q=N^{1/2k}$. Assume that $|A|\geq N^{1/2-c}$. We will construct a sequence of sets $A_0=A \supset A_1\supset A_2\supset\cdots$ satisfying
\begin{enumerate}
\item $|A_i|\geq N^{1/2-c_i}$, where $c_i=(3k)^ic$;
\item for each $p\in\mathcal{P}$ there is a set $S_p^i\subset\Z/p\Z$ such that $A_i\pmod p\subset S_p^i$ and
\[ \sum_{p\in\mathcal{P}}\frac{\log p}{p}\cdot\frac{|S_p^i|}{p}<(1-\eta)^i(\log Q+O(1)), \]
where $\eta=(10k)^{-8}\epsilon^4$ (say).
\end{enumerate}

Let us first deduce the theorem assuming these sets are constructed. For some $i=O_{k,\epsilon}(1)$ we have $(1-\eta)^i<1/4k$. Hence
\[ \sum_{p\leq Q}\frac{\log p}{p}\cdot\frac{|A_i\pmod p|}{p}\leq\sum_{p\in\mathcal{P}}\frac{\log p}{p}\cdot\frac{|S_p^i|}{p}+\sum_{\substack{p\notin\mathcal{P}\\ p\leq Q}}\frac{\log p}{p} 
<\frac{1}{3k}\log Q. \]
Apply the larger sieve (Lemma \ref{lem:gallagher2}) to $A_i$ we get $|A_i|\ll Q\leq N^{1/4}$. This contradicts the lower bound $|A_i|\geq N^{1/2-c_i}$ if $c$ is small enough.

Now we construct the sets $A_i$. Suppose that we have chosen $A_i$ and $S_p^i$ ($p\in\mathcal{P}$) satisfying the above properties. We may assume that
\[ \frac{1}{4k}(\log Q+O(1))\leq (1-\eta)^{i+1}(\log Q+O(1))\leq \sum_{p\in\mathcal{P}}\frac{\log p}{p}\cdot\frac{|S_p^i|}{p}<(1-\eta)^i(\log Q+O(1)). \]
(If the lower bound above fails, then we may take $A_{i+1}=A_i$). By Lemma \ref{lem:lift}, we have
\[ E(A_i)\gg |A_i|^3 N^{-c_i}\left(\frac{|\mathcal{P}|}{Q}\right)^{2k-1}\gg |A_i|^3 N^{-3kc_i}. \]
By a standard additive combinatorial argument (Lemma 3.6 in \cite{GH14}), there is a subset $H\subset [-N,N]$ with $|H|\gg |A_i|N^{-3kc_{i}}$ such that $|A_i\cap (A_i+h)|\gg |A_i|N^{-3kc_{i}}$ for each $h\in H$. We will take $A_{i+1}$ to be $A_i\cap (A_i+h)$ for an appropriate $h\in H$. It remains to choose $h\in H$ such that property (2) holds with $A_{i+1}=A_i\cap (A_i+h)$. This will be achieved by the following lemma, which completes our construction of $A_{i+1}$ and the proof.


\begin{nonumlemma}
Let the notations be as above. There exists $h\in H$ such that
\[ \sum_{p\in\mathcal{P}}\frac{\log p}{p}\cdot\frac{|S_p^i\cap (S_p^i+h)|}{p}<(1-\eta)\sum_{p\in\mathcal{P}}\frac{\log p}{p}\cdot\frac{|S_p^i|}{p}. \]
\end{nonumlemma}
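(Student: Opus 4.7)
The plan is to bound the first moment
$E_h := \frac{1}{|H|}\sum_{h\in H}\sum_{p\in\mathcal{P}}\frac{\log p}{p}\cdot\frac{|S_p^i\cap(S_p^i+h)|}{p}$
and show $E_h < (1-\eta)T_0$, where $T_0$ is the right-hand side of the target inequality (without the $(1-\eta)$); pigeonhole on $h\in H$ then produces the required $h$. The starting point is the exact identity $|S_p^i\cap(S_p^i+h)| = \max(0,\,L_p-|hd_p^{-1}|_p)$, valid because $S_p^i$ is an AP of length $L_p := |S_p^i|$ and common difference $d_p$; here $|u|_p$ denotes the distance of $u$ to $0$ in $\mathbb{Z}/p\mathbb{Z}$. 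Writing $\bar{f}_p$ for the average of $|S_p^i\cap(S_p^i+h)|$ over $h\in H$, the task reduces to showing that $\bar{f}_p$ is bounded away from $L_p$ on a weighted majority of primes.

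I set $\delta := 3\eta$, and call $p\in\mathcal{P}$ \emph{bad} if $\bar{f}_p \geq (1-\delta)L_p$, and \emph{good} otherwise. Good primes contribute a saving $\geq\delta\cdot\frac{\log p}{p}\cdot\frac{L_p}{p}$ each to $T_0 - E_h$, so everything hinges on bounding the weighted measure of bad primes. For bad $p$, Markov's inequality applied to the identity $\frac{1}{|H|}\sum_h \min(|hd_p^{-1}|_p,\,L_p) = L_p - \bar{f}_p \leq \delta L_p$ shows that at least $|H|/2$ of the $h\in H$ reduce modulo $p$ into a specific arithmetic progression of common difference $d_p$ and length $\leq 4\delta L_p + 1$ centered at $0$. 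Cauchy-Schwarz on this short AP then yields
$\sum_{r\in\mathbb{Z}/p\mathbb{Z}}H_p(r)^2 \geq \frac{|H|^2}{20\delta p}$,
where $H_p(r) := |\{h\in H : h\equiv r\pmod p\}|$ (one can safely ignore the negligible contribution of primes $p < 1/\delta$, which sum to $O_k(1)$ in $\frac{\log p}{p}$-weight).

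Since $|H|\gg N^{1/2-O(c)}$ together with $p\leq Q = N^{1/2k}$ implies $|H|\gg p$, the off-diagonal count $\sum_r H_p(r)^2 - |H|$ is $\geq |H|^2/(40\delta p)$. Weighting by $\log p$, summing over bad primes, and applying the elementary bound $\sum_{p\mid n}\log p \leq \log n$ yields
$\frac{|H|^2}{40\delta}\sum_{p\,\text{bad}}\frac{\log p}{p} \leq \sum_{h_1,h_2\in H,\, h_1\neq h_2}\log|h_1-h_2| \leq |H|^2\log(2N)$,
hence $T_{0,\text{bad}} := \sum_{p\,\text{bad}}\frac{\log p}{p}\cdot\frac{L_p}{p} \leq \sum_{p\,\text{bad}}\frac{\log p}{p} \leq 80\delta\log N$. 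Using the induction-step hypothesis $T_0 \geq \log Q/(4k) = \log N/(8k^2)$, this forces $T_{0,\text{bad}} \leq 640 k^2 \delta T_0 \leq T_0/2$, provided $\delta \leq 1/(1280 k^2)$; and indeed $\delta = 3\eta = 3(10k)^{-8}\epsilon^4$ is comfortably below this threshold. Therefore $T_0 - E_h \geq \delta(T_0 - T_{0,\text{bad}}) \geq \delta T_0/2 > \eta T_0$, completing the proof.

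The main subtlety is the bad-prime bound: because the ``concentrating'' AP depends on $p$ through $d_p$, Gallagher's larger sieve cannot be applied to a single fixed subset of $H$. Working prime-by-prime via Cauchy-Schwarz and the off-diagonal identity bypasses this, and is also the only place where the large-sieve regime $p\leq N^{1/2k}$ enters (through $|H|\gg p$).
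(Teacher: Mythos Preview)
Your argument has a genuine gap: you assert that $S_p^i$ is an arithmetic progression with common difference $d_p$, but this is only guaranteed for $i=0$. The sets are built recursively via $S_p^{i+1}=S_p^i\cap(S_p^i+h_i)$, and when $|S_p^i|>p/2$ --- which the hypothesis $|S_p^0|\leq(1-\epsilon)p$ certainly allows for small $\epsilon$ --- this intersection can split into two arcs (after dilating by $d_p^{-1}$); after $i$ steps $S_p^i$ may be a union of up to $2^i$ arcs. Your ``exact identity'' $|S_p^i\cap(S_p^i+h)|=\max(0,\,L_p-|hd_p^{-1}|_p)$ then fails, and with it the Markov step that pins many $h$ into a short progression modulo $p$. (Even for a genuine interval of length $L_p>p/2$ the identity is not exact: when $|hd_p^{-1}|_p>p-L_p$ the intersection has size $2L_p-p$. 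That particular wrinkle can be repaired using $L_p\leq(1-\epsilon)p$ together with $\delta\ll\epsilon$, but the general-set issue cannot.)

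The paper handles arbitrary $S_p^i$ by invoking a consequence of Pollard's theorem (Lemma~3.8 of \cite{GH14}): for \emph{any} $S\subset\Z/p\Z$, the set of shifts $h$ with $|S\cap(S+h)|\geq(1-\theta)|S|$ has size at most $4\theta|S|+1$. This replaces your interval-specific identity and yields, for each bad prime, a subset $H_p\subset H$ of density $\geq 1-\eta^{1/4}$ occupying at most $4\eta^{1/4}|S_p^i|+1$ residues modulo $p$. From that point on your off-diagonal/Cauchy--Schwarz computation is essentially a direct unwinding of the larger sieve (the paper quotes Theorem~2.3 of \cite{GH14} instead) and would go through unchanged.
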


\begin{proof}
Assuming the contrary we get
\[ \sum_{h\in H}\sum_{p\in\mathcal{P}}\frac{\log p}{p}\cdot\frac{|S_p^i\cap (S_p^i+h)|}{p}\geq (1-\eta)|H|\sum_{p\in\mathcal{P}}\frac{\log p}{p}\cdot\frac{|S_p^i|}{p}. \]
Let $\mathcal{P}_{\text{bad}}$ be the set of those primes $p\in\mathcal{P}$ with
\[ \sum_{h\in H}\frac{|S_p^i\cap (S_p^i+h)|}{p}\geq (1-\eta^{1/2})|H|\cdot\frac{|S_p^i|}{p}. \]
Since
\[ \sum_{h\in H}\sum_{p\in\mathcal{P}}\frac{\log p}{p}\cdot\frac{|S_p^i\cap (S_p^i+h)|}{p}\leq |H|\sum_{p\in\mathcal{P}_{\text{bad}}}\frac{\log p}{p}\cdot\frac{|S_p^i|}{p}+(1-\eta^{1/2})|H|\sum_{p\in\mathcal{P}\setminus\mathcal{P}_{\text{bad}}} \frac{\log p}{p}\cdot\frac{|S_p^i|}{p}, \]
we deduce that $\mathcal{P}_{\text{bad}}$ is large:
\[ \sum_{p\in \mathcal{P}_{\text{bad}}}\frac{\log p}{p}\cdot\frac{|S_p^i|}{p}\geq (1-\eta^{1/2})\sum_{p\in \mathcal{P}}\frac{\log p}{p}\cdot\frac{|S_p^i|}{p}\geq\frac{1}{5k}\log Q=\frac{1}{10k^2}\log N. \]
For $p\in\mathcal{P}_{\text{bad}}$, let $H_p\subset H$ be the set of those $h\in H$ with $|S_p^i\cap (S_p^i+h)|\geq (1-\eta^{1/4})|S_p^i|$. Since
\[ \sum_{h\in H}\frac{|S_p^i\cap (S_p^i+h)|}{p}\leq \frac{|S_p^i|}{p}|H_p|+(1-\eta^{1/4})\frac{|S_p^i|}{p} (|H|-|H_p|),  \]
we deduce that $|H_p|\geq (1-\eta^{1/4})|H|$. By Lemma 3.8 in \cite{GH14} (a consequence of Pollard's theorem) applied to $S_p^i$, we have $|H_p\pmod p|\leq 4\eta^{1/4} |S_p^i|+1\leq p/20k^2$. Apply the larger sieve as in Theorem 2.3 of \cite{GH14} to the set $H$ with those primes $p\in\mathcal{P}_{\text{bad}}$, we conclude that
\[ |H|\ll\frac{Q}{(1-\eta^{1/4})^2(20k^2)\sum_{p\in\mathcal{P}_{\text{bad}}}\frac{\log p}{p}-\log N}\ll Q\leq N^{1/4}. \]
This contradicts the lower bound on the size of $H$.
\end{proof}


\end{appendix}

\bibliographystyle{plain}
\bibliography{sieve}{}

\end{document}